\crefname{equation}{}{}
\crefname{equation}{}{}
\crefname{figure}{Figure}{Figure}
\crefname{section}{Section}{Section}
\crefname{proposition}{Proposition}{Proposition}
\crefname{theorem}{Theorem}{Theorem}
\crefname{corollary}{Corollary}{Corollaries}
\crefname{definition}{Definition}{Definition}
\crefname{lemma}{Lemma}{Lemma}
\crefname{notation}{Notations}{Notation}
\crefname{remark}{Remark}{Remark}
\crefname{claim}{Claim}{Claim}
\crefname{assumption}{Assumption}{Assumption}
\newtheorem{theorem}{Theorem}[section]
\newtheorem{corollary}[theorem]{Corollary}
\newtheorem{definition}[theorem]{Definition}
\newtheorem{proposition}[theorem]{Proposition}
\newtheorem{remark}[theorem]{Remark}
\newtheorem{lemma}[theorem]{Lemma}
\newcommand{\N}{\mathbb{N}}
\newcommand{\R}{\mathbb{R}}
\newcommand{\F}{\mathbb{F}}
\newcommand{\Z}{\mathbb{Z}}
\newcommand{\emin}{\ \mathrm{e_{min}}}
\newcommand{\emax}{\ \mathrm{e_{max}}}
\newcommand{\fl}{\mathrm{fl}_u}
\newcommand{\tmu}{\tilde{\kappa}}
\newcommand{\vect}[1]{\mathbf{#1}}
\newcommand{\dist}{\operatorname{dist}}
\newcommand{\diag}{\operatorname{diag}}
\newcommand{\deriv}[2]{\mathrm{D}_{#1} {#2}}
\title{When can forward stable algorithms be composed stably?}
\author{Carlos Beltr\'an}
\author{
Vanni Noferini}
\author{
Nick Vannieuwenhoven}
\thanks{Universidad de Cantabria, Avda. Los Castros s/n, Santander, Spain, \texttt{beltranc@unican.es}. Supported by Grant PID2020-113887GB-I00 funded by MCIN/ AEI /10.13039/501100011033, and by the Banco Santander and Universidad de Cantabria grant 21.SI01.64658.}
\thanks{Aalto University, Department of Mathematics and Systems Analysis, P.O. Box 11100, FI-00076 Aalto,
Finland, \texttt{vanni.noferini@aalto.fi}. Supported by an Academy of Finland grant (Suomen Akatemian p\"a\"at\"os 331240).}
\thanks{KU Leuven, Department of Computer Science, Celestijnenlaan 200A, B-3001 Leuven, Belgium, \texttt{nick.vannieuwenhoven@kuleuven.be}. Supported by a Postdoctoral Fellowship of the Research Foundation---Flanders (FWO) with project 12E8119N. Part of this work was initiated while visiting the Universidad de Cantabria supported by the FWO Grant for a long stay abroad V401518N.}
\begin{document}

\begin{abstract}
We state some widely satisfied hypotheses, depending only on two functions $g$ and $h$, under which the composition of a stable algorithm for $g$ and a stable algorithm for $h$  is a stable algorithm for the composition $g \circ h$.
\end{abstract}
\maketitle


\section{Introduction}\label{sec:intro}

As computers became increasingly common tools for engineers, mathematicians, and scientists, the \emph{accuracy} of numerical computations in floating-point arithmetic has been of great interest. It was realized early in the history of numerical analysis that mathematically equivalent algorithms can nevertheless behave in widely different ways when implemented in floating-point arithmetic. To understand this behavior, the concept of \emph{stability of an algorithm} was introduced, which categorizes how much an algorithm can be trusted when executed in finite-precision arithmetic.
{Popular} references such as \cite{Higham1996, Trefethenbook} consider three different types that can be summarized at a high level as follows:
\begin{enumerate}
\item A \emph{forward stable} algorithm (also called \emph{weakly stable} in \cite{Bunch1987}) gives an answer that is close to the actual exact answer for the given input.

\item A \emph{mixed forward-backward stable} algorithm (sometimes called \emph{numerically stable} \cite{Jong77,Higham1996}) gives an almost exact answer to a nearby problem. 

\item A \emph{backward stable} algorithm (also called \emph{strongly stable} in \cite{Bunch1987}) provides the exact answer to a nearby problem. 
\end{enumerate}
{Rigorous} definitions of these types of stability are presented in \cref{sec_numerical_stability}.

A {classical} example of mathematically equivalent algorithms with different backward stability properties is found in solving the overdetermined least-squares problem $A\mathbf{x}=\mathbf{b}$, where $A \in \mathbb{R}^{m \times n}$ is a left-invertible rectangular matrix.
A backward stable algorithm is obtained by computing a reduced singular value decomposition of $A=USV^T$, solving the diagonal system $S\mathbf{y}=U^T\mathbf{b}$, and computing $\mathbf{x} = V \mathbf{y}$ \cite{Bjorck2015}. Note that the individual steps in this composition are themselves  stable algorithms. On the other hand, transforming the overdetermined system into the equivalent linear system of normal equations $(A^T A)\mathbf{x} = A^T\mathbf{b}$ and solving it by Gauss elimination with full pivoting does not result in a backward stable algorithm when the matrix condition number $\kappa(A)=\|A\|_2 \|A^{\dagger}\|_2$ is large \cite{Bjorck2015}; herein, $A^\dagger$ denotes the pseudoinverse and $\|A\|_2$ the spectral norm of $A$. Interestingly, both of these algorithms are \emph{also} individually stable, but in this case their composition yields a disappointingly unstable algorithm (for some inputs). Here, the culprit is known to be the condition number of the Gram matrix $A^T A$ which is the square of $\kappa(A)$ \cite{Higham1996}.

The above example suggests that an unstable algorithm resulted because of a much higher condition number in an intermediate step. However, this is not the only reason why a composition of stable algorithms can fail to be stable. For example,
\begin{align*}
g : \R \to \R,\, x \mapsto x / 3  \quad\text{and}\quad  h : \R \to [1, \infty),\, x \mapsto x^2 + 1
\end{align*}
are individually backward stable algorithms when implemented in the straightforward way. However, in IEEE Standard 754 double-precision floating-point arithmetic, computing $h\circ g$ at $0$ results in the floating-point representation $\mathrm{fl}(1/3)$ of $\frac{1}{3}$. Since $\mathrm{fl}(1/3) < \frac{1}{3}$, there is no real input $x'$ such that $(g \circ h
)(x') = \mathrm{fl}(1/3)$. Hence, $g \circ h$ is not a backward stable algorithm to compute $\frac{1}{3}(x^2 + 1)$.

The foregoing observations raise pertinent questions: \emph{When does composing stable algorithms yield another stable algorithm? Why does it work only in some cases and not in others? Are the above problems the only obstructions?} It appears these questions received little attention in the literature, even though a way to compose algorithms stably seems to us a cornerstone of any mathematical theory of stability. For backward stability, the question was investigated to some extent by Bornemann \cite{Bornemann}, whose approach is discussed in \cref{sec_related_work}.


In this paper, we identify two mild sufficient conditions based on condition numbers \cite{Rice1966,BC2013}, called \emph{amenability} and \emph{compatibility}, and prove that these are sufficient to stably compose \emph{forward stable} algorithms. The formal definition of the condition number $\kappa(f, x)$ of a function $f : D \to E$ at an input $x \in D$ is recalled in \cref{def:mu}. Our main result can be summarized informally as follows.

\begin{corollary}[Informal version of \cref{th:composition}]\label{thm_informal}
Let $f = g \circ h$ be a composition of functions. The composition of forward stable algorithms for $g$ and $h$ results in a forward stable algorithm for $f$ if all of the following conditions hold:
\begin{itemize}
    \item $1+\kappa(f,x)$ is not much smaller than $(1+\kappa(g, h(x)))(1+\kappa(h,x))$;
    \item the growth of the condition number $x \mapsto \kappa(\phi,x)$ when $x$ varies is essentially bounded by $\kappa(\phi,x)$ itself
    for both $\phi=g,h$; and
    \item the limit of $\kappa(\phi,x)$ is $\infty$ as $x$ tends to either the boundary of the domain of $\phi$ or an input $x'$ with $\kappa(\phi, x')=\infty$, for both $\phi=g,h$.
\end{itemize}
The first condition is called \emph{compatibility} and the last two conditions {are collectively called} \emph{amenability} of $g$ and $h$.
\end{corollary}

Compatibility and amenability only involve properties of the computational problems $f$, $g$, and $h$ themselves. This implies that if the problem $f$ is decomposed into amenable and compatible problems as $f = g \circ h$, then any choice of forward stable algorithms implementing $g$ and $h$ yields a corresponding forward stable algorithm for $f$.

\subsection{Why forward errors?}

Our approach proves forward stability of the composition of forward stable algorithms, but we could not find arguments yielding similar claims for the other standard notions of stability. Although forward stability is sometimes overlooked as too weak, we believe nevertheless that there are reasons to give it more credit:

\begin{enumerate}
\item When the goal is to prove instability of an algorithm, forward instability is actually the strongest concept, by contraposition.  {When} one of the conditions in \cref{thm_informal} fails, it is often possible to find specific inputs for which the algorithm is provably unstable.
\item Algorithms are increasingly being implemented in either a multiple or variable precision setting \cite{bf2000,br14,cahi18,fahi18,fahi19,high17-multiprec,hilu20}. Many of the variable-precision algorithms follow the philosophy of guaranteeing a {user-specified upper bound} on the forward error. A forward stable algorithm is useful for this goal even if it is not backward stable.
\item The final users of numerical algorithms are often not numerical analysts, but rather scientists and engineers. In our experience, forward stability and forward errors are easier to interpret for such non-experts than backward stability.
Forward stability additionally exposes the hardness of the problem as measured by the condition number, while a backward error analysis may inadvertently give a non-expert the impression that the numerically computed \emph{outputs} can be trusted regardless of the problem's condition.
\end{enumerate}

\subsection{Outline and contributions}

In the next section, we review Bornemann's approach to investigate the backward stability of composed algorithms using the problem's condition number.
A gap in the main result is identified, illustrating the importance of our amenability condition, at least for forward stability.

\cref{sec:cond} recalls Rice's condition number (\cref{def:mu}) and proves a simple but important condition number bound for a composition of functions $f=g\circ h$ in \cref{lem:mucompupper}. Then, Pryce's coordinatewise relative error metric is recalled in \cref{def:finsler}. This defines a \emph{metric} measuring relative error,
so that the corresponding condition number is one for relative error. The main result of this section is the characterization of the condition number in Pryce's metric in \cref{prop_cond_expression}, which reduces to the usual one for univariate real-valued functions.

The main result of this paper states that properties involving the above condition number can establish the stability of compositions of forward stable algorithms. For such a statement to be precise, however, we need to clarify what we mean by a ``numerical algorithm'' for a function $f$ and which exact definitions of forward, mixed, and backward stability we use. This is covered in \cref{sec:floatingpoint}.

\Cref{sec_main} is the heart of this paper, stating and proving the main result. First, we introduce the concept of \textit{amenable problems} in \cref{def_amenable}. This captures a wide class of problems that are well-behaved in their domain of definition and the growth of their condition number. Next, a \textit{compatibility condition} is introduced in \cref{def_compatible}. 
The main result, \cref{th:composition}, on the forward stability of a composition of forward-stable algorithms is proven under the assumption of amenable and compatible problems. Finally, we investigate the string of implications from backward to mixed to forward stability, under the hypothesis of amenability in \cref{thm_backward_implies_mixed,thm_mixed_implies_fp}.

All of the main results are employed in \cref{sec:amenandstable} to identify several elementary amenable problems and forward stable algorithms, in the coordinatewise relative error metric, to solve them. The surprising example of the sine function, a non-amenable function on $\R$ that cannot be realized as a composition of well-behaved (i.e., compatible and amenable) functions, is featured in \cref{sec_sinful}. Finally, our conclusions are presented in \cref{sec_conclusions}.


\section{Bornemann's approach} \label{sec_related_work}

The study of {compositions} of backward stable algorithms was initiated by Bornemann \cite{Bornemann}. His prime insight was that the stability of a composition $g \circ h$ of backward stable algorithms $g$ and $h$ can be gleaned from the condition number of the inverse function $h^{-1}$. In other words, he employed the problem-specific condition number for establishing algorithm-specific stability. Bornemann's results are based on the study of the relative error condition number
\[
\kappa_f(x)=\frac{|xf'(x)|}{|f(x)|}.
\]
He also introduces a \emph{stability indicator} to quantify how backward stable an algorithm is. If a numerical algorithm $f^u$ for a mathematical function $f$ {runs} in floating-point arithmetic with unit roundoff $u$, {Bornemann's stability} indicator is the smallest number $\beta_f$ such that
\[
f^u(x)=f(x+\Delta x),\quad \frac{|\Delta x|}{|x|}\leq \beta_f(x) u+O(u^2),
\]
where $\beta_f(x)$ can depend on the base point $x$. If there is no such $\Delta x$, $\beta_f(x)=\infty$ and the algorithm is not backward stable at $x$.

The main result from \cite{Bornemann} is the following connection between the stability indicator of a composition of functions $g \circ h$ and the condition number of $h^{-1}$ in case $h$ is invertible:
\begin{equation}\label{eq:betaf}
\beta_f( x ) \leq \beta_h(x) + \kappa_{h^{-1}}(h(x))\,\beta_g(h(x)).
\end{equation}
{While this approach helps in a number of examples, such as those} described in detail in \cite{Bornemann}, there are two main drawbacks:
\begin{itemize}
    \item Inequality \eqref{eq:betaf} needs $h$ to be invertible, while many functions we use in numerical analysis are not even defined between spaces of the same dimension.
    \item {Although not explicitly stated in \cite{Bornemann}, a careful analysis of the proof reveals that the result is only valid for sufficiently small values of $u$. Indeed, the coefficient $O(u^2)$ term in \eqref{eq:betaf} is not necessarily an absolute constant, but it may depend} on the dimension of the problem and even the base point $x$. Assume for example that an algorithm for computing $f(x)=x$ is designed with the property that
    $$
    f^u(x)=f(x+\Delta x),\quad \frac{|\Delta x|}{|x|}\sim u+2^{x^2+x^{-2}}u^2.
    $$
    The stability indicator is $1$ for every $x\in\R$ and according to this we could be tempted to say that ``for every $x$, the algorithm is perfectly backward stable at $x$''... but most if not all  numerical analysts would definitely say that the algorithm is not stable, since the constant hidden in the $O(u^2)$ term is huge and, even worse, unbounded. 
\end{itemize}

All in all, although Bornemann's {heuristics} is a first sensible approach to the problem of composing stable algorithms, a more formal approach is required to avoid {subtleties} as the ones described above

\section{The condition number in coordinatewise relative error}\label{sec:cond}

We restrict ourselves to computational problems that can be modeled by a function $f$ from a subset of one real vector space to another. We start by recalling Rice's definition of condition number \cite{Rice1966} in this context, assuming to have fixed (possibly different) distance functions on the domain and codomain of $f$.

Recall that given a subset $D\subseteq\R^d$, a map $\dist_D:D\times D\to [0,\infty]$ is a {\em distance function} if (i) it is symmetric, (ii) it satisfies the triangle inequality $\dist_D(x,z) \leq \dist_D(x,y) + \dist_D(y,z)$ for $x,y,z\in D$, and (iii) $\dist_D(x,y)=0 \Leftrightarrow x=y$. If there is no ambiguity on the nature of the set $D$, we omit the subscript and simply write $\dist$.

\begin{definition}[Condition number]\label{def:mu}
Given a function $f:S\to T$, where $S\subseteq\R^m$ and $T\subseteq\R^n$ and two distance functions $\dist_S$ and $\dist_T$ defined on $S$ and $T$ respectively, the condition number $\kappa(f,x)$ of $f$ at $x\in S$ is
\[
\kappa(f,x) = \lim_{\epsilon\to0} \sup_{\substack{y\in S,\\ 0<\dist_{S}(x,y)\leq \epsilon}}\frac{\dist_{T}(f(x),f(y))}{\dist_{S}(x,y)}\in[0,\infty].
\]
If $x$ is an isolated point for the topology induced by the metric on $S$, then we set by convention $\kappa(f,x)=0$.
We also define
\[
\tmu=1+\kappa.
\]
\end{definition}

\noindent The $\limsup$ in \cref{def:mu} is always defined since we accept the value $\infty$.

The condition number is a \textit{geometric invariant} that depends on the function $f$ but not on the algorithm we use to compute it. There is also an explicit dependence of the condition number on the choice of the distance functions $\dist_S$ and $\dist_T$.
In addition, condition numbers satisfy the following sensible composition inequality.

\begin{lemma}\label{lem:mucompupper}
 If $f = g\circ h$ is a composition of functions, then
	\begin{equation}\label{eq:boundmu}
	\tmu(f,x)\leq \tmu(g,h(x))\,\tmu(h,x).
	\end{equation}
\end{lemma}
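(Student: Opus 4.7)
The plan is to prove the inequality through a multiplicative decomposition of the condition ratio, after disposing of degenerate cases. Denote the composition as $h : S \to T$ followed by $g : T \to U$, so $f = g \circ h$, and equip $U$ with a distance function $\dist_U$. Fix $x \in S$. If $x$ is isolated in $S$, the convention in \cref{def:mu} gives $\kappa(h,x) = \kappa(f,x) = 0$, hence $\tmu(f,x) = 1 \leq \tmu(g,h(x))\,\tmu(h,x)$, using that $\tmu \geq 1$ always. If instead $\kappa(h,x) = \infty$ or $\kappa(g,h(x)) = \infty$, the right-hand side equals $\infty$ (again because the other factor is $\geq 1$) and the bound is trivial. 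Hence we may assume $x$ is not isolated and both condition numbers are finite.

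For each $\epsilon > 0$, define
$$
\Phi(\epsilon) := \sup_{y \in S,\, 0 < \dist_S(x,y) \leq \epsilon} \frac{\dist_T(h(x),h(y))}{\dist_S(x,y)}, \qquad \Psi(\delta) := \sup_{z \in T,\, 0 < \dist_T(h(x),z) \leq \delta} \frac{\dist_U(g(h(x)),g(z))}{\dist_T(h(x),z)},
$$
both of which are nonincreasing as their argument tends to $0$ and converge to $\kappa(h,x)$ and $\kappa(g,h(x))$ respectively. For $y$ with $0 < \dist_S(x,y) \leq \epsilon$, either $h(y) = h(x)$ (so $f(y)=f(x)$, contributing $0$ to the ratio) or $h(y) \neq h(x)$, in which case one can factor
$$
\frac{\dist_U(f(x),f(y))}{\dist_S(x,y)} = \frac{\dist_U(g(h(x)),g(h(y)))}{\dist_T(h(x),h(y))} \cdot \frac{\dist_T(h(x),h(y))}{\dist_S(x,y)}.
$$
The second factor is bounded by $\Phi(\epsilon)$. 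Since $\dist_T(h(x),h(y)) \leq \Phi(\epsilon)\,\epsilon$, the first factor is bounded by $\Psi(\Phi(\epsilon)\,\epsilon)$, as $h(y) \in T$. Taking the supremum over admissible $y$ yields
$$
\sup_{y \in S,\, 0 < \dist_S(x,y) \leq \epsilon} \frac{\dist_U(f(x),f(y))}{\dist_S(x,y)} \leq \Psi(\Phi(\epsilon)\,\epsilon)\cdot \Phi(\epsilon).
$$

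Letting $\epsilon \to 0$, the finiteness of $\kappa(h,x)$ forces $\Phi(\epsilon)\epsilon \to 0$, so $\Psi(\Phi(\epsilon)\,\epsilon) \to \kappa(g,h(x))$ by the limit definition of the condition number. This gives the stronger bound $\kappa(f,x) \leq \kappa(g,h(x))\,\kappa(h,x)$, from which the claim follows via
$$
\tmu(f,x) = 1 + \kappa(f,x) \leq 1 + \kappa(g,h(x))\,\kappa(h,x) \leq (1+\kappa(g,h(x)))(1+\kappa(h,x)) = \tmu(g,h(x))\,\tmu(h,x).
$$
The main delicacy I anticipate is the careful handling of the degenerate configurations (points isolated in $S$ or $T$, $h$ locally constant at $x$, infinite condition numbers), where the $+1$ appearing in $\tmu$ rather than just $\kappa$ is precisely what ensures a uniformly valid statement, in particular ruling out pathological products of the form $0 \cdot \infty$.
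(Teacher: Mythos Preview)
The proposal is correct and follows essentially the same route as the paper: factor the distance ratio for $f=g\circ h$ into a $g$-part times an $h$-part, bound each by the corresponding $\sup$ from \cref{def:mu}, and pass to the limit. Your packaging via the auxiliary monotone functions $\Phi,\Psi$ is a bit tidier than the paper's explicit $\epsilon$--$\delta$ bookkeeping, but the argument and the handling of the degenerate cases are the same.
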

\begin{proof}
	See \cref{sec:P0}
\end{proof}

The choice of the distances in \cref{def:mu} depends on the context.
As real arithmetic operations are substituted in a numerical algorithm by floating-point operations in the standard model of floating-point arithmetic \cite{Higham1996}, this results in small relative errors. Therefore, we focus on a metric that measures relative errors.

Let first us clarify what we mean by relative error. Usually, in numerical analysis, the relative error of an approximation $\tilde{x} \in \R$ to an exact value $x \in \R$ is defined as $\frac{|x-\tilde{x}|}{|x|}$.
Unfortunately, this does not define a distance on $\R$ because symmetry and the triangle inequality can fail. Therefore, neither \cref{def:mu} nor \cref{lem:mucompupper} should be expected to hold for this way of measuring relative error.
Instead, we adopt the \textit{coordinatewise relative error metric} that was already introduced by Pryce \cite{Pryce1984} in 1984, building on Olver's Rp error analysis \cite{Olver1978,Olver1982}. Herein, the distance in $\R^d$ between $x$ and $y$ is defined as the length of the shortest curve joining $x$ and $y$, where the length is ultimately determined by the choice of a \textit{Riemannian metric} on $\R$ that captures the concept of relative errors. This coordinatewise relative error metric and the surprising topology it induces on $\R^d$ is stated next.

\begin{definition}[Coordinatewise relative error metric \cite{Pryce1984}]\label{def:finsler}
The \emph{topology associated with the coordinatewise relative error in $\R^d$} is defined by $3^d$ connected components $U_j$ in $\R^d$, corresponding to the sign pattern (an element of $\{-1,0,1\}^d$) of each vector. The connected component $U_0=\{0\}$ consists of just one point. On each of the other components $U_1,\ldots,U_{3^d-1}$, we define a \emph{Riemannian metric} as follows. Let $v,w$ be a pair of tangent vectors lying in the subspace spanned by the elements of $U_j$ and based at the point $x\in U_j$. We associated with them the inner product
\[
\langle v,w\rangle_x=\sum_{i\in\chi(x)} \frac{v_iw_i}{|x_i|^2},
\]
where $\chi(x) = \{ i \mid x_i\neq0 \}$ contains the nonzero indices of $x$, which defines the tangent space to $U_j$ at $x$. The induced norm of a tangent vector $v$ is $\|v\|_x=\langle v,v\rangle_x^{1/2}$. The {\em length} of a $C^1$ curve $\gamma:[a,b]\to U_j$ is then defined by
\[
\mathrm{Length}(\gamma)=\int_a^b\|\gamma'(t)\|_{\gamma(t)}\, \mathrm{d} t
= \int_a^b \left(\sum_{i\in\chi(\gamma(t))} \frac{v_i^2}{|\gamma_i(t)|^2}\right)^{1/2}\,\mathrm{d}t,
\]
where $\gamma'(t)=(v_1,\dots,v_d)$ is the derivative of $\gamma$ at $t$, and $\gamma_i$ is the $i$th component function of $\gamma$.
The \emph{distance} between any pair of points $x,y\in \R^d$ is $0$ if $x=y$, $\infty$ if $x\in U_j$ and $y\in U_k$ with $j\neq k$ lie on different connected components, and
\begin{equation}\label{eq:distancedef}
\dist(x,y)=\inf_{\gamma}\mathrm{Length}(\gamma)
\end{equation}
otherwise.
The infimum is taken over all $C^1$ curves contained in the connected component $U_j$ with extremes $x$ and $y$. This distance function satisfies the axioms of distance, including the triangle inequality $\dist(x,z)\leq \dist(x,y)+\dist(y,z)$ for all $x,y,z\in \R^d$.
\end{definition}

From now on whenever we refer to $\R^d$ we assume that it is endowed with the topology and the metric structure of \cref{def:finsler}.

The infimum of \eqref{eq:distancedef} is actually a minimum; that is, there exists a curve with extremes $x,y$ such that its length is equal to $\dist(x,y)$. This curve is called a \emph{minimizing geodesic} between these two points. The existence follows from the Hopf--Rinow Theorem, see for example \cite[Theorem 1.10]{Gromov}. In $\R$, the relative error distance between two points $x$ and $y$ with the same sign is either $\dist(0,0)=0$ or
\[
\dist(x,y)
=\int_{0}^1 \|x-y\|_{tx+(1-t)y}\, \mathrm{d}t
=\int_{0}^1 \frac{|x-y|}{|tx+(1-t)y|}\, \mathrm{d}t
=\left|\log\frac{x}{y}\right|.
\]
If $x$ and $y$ do not have the same sign, then $\dist(x,y)=\infty$. The coordinatewise relative error metric in $\R^d$ is the product metric of $\R$ when the latter is endowed with the relative error metric. Hence, the squared distance between two points $v=(v_1,\ldots,v_d)$ and $w=(w_1,\ldots,w_d)$ is
\(
\dist(v,w)^2
= \dist(v_1,w_1)^2+\cdots+\dist(v_d,w_d)^2.
\)
In particular, for $u\in(0,\frac{1}{4})$ we get after some manipulations that
if $w_i=v_i(1+\delta_i)$ for some $\delta_i\in(-u,u)$, then $\dist(v,w)< 2\sqrt{d}\,u$.
The following fact will also be helpful.
\begin{lemma}\label{lem:geodesicas}
	Let $f:S\to\R^n$ be as in \cref{def:mu}. Let $x,y\in S$. If there is a minimizing geodesic $\gamma(t)$ joining $x$ and $y$ and such that $\kappa(f,z)\leq C$ {for all $z \in \gamma$}, then
	\[
	\dist(f(x),f(y))\leq C\dist(x,y).
	\]
\end{lemma}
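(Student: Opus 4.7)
The plan is to exploit the condition number hypothesis to establish a local Lipschitz-type bound around each point of the geodesic, and then to obtain the global bound by partitioning the geodesic into finitely many pieces on which the local bound applies, summing via the triangle inequality. Throughout, I parametrize the given minimizing geodesic by arc length, so that $\gamma:[0,L]\to S$ with $L=\dist(x,y)$, $\gamma(0)=x$, $\gamma(L)=y$, and crucially
\[
\dist(\gamma(s),\gamma(t)) \;\leq\; |t-s| \quad \text{for all } s,t\in[0,L],
\]
since the restriction of a minimizing geodesic to any subinterval remains a minimizing curve between its endpoints.

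Fix $\eta>0$. For each $z\in\gamma([0,L])$, the definition of $\kappa(f,z)$ (\cref{def:mu}), together with $\kappa(f,z)\leq C$, yields some $\epsilon_z>0$ such that
\[
\dist(f(z),f(w)) \;\leq\; (C+\eta)\,\dist(z,w) \quad \text{whenever } w\in S \text{ and } 0<\dist(z,w)\leq \epsilon_z.
\]
(The case $w=z$ is trivial.) The image $\gamma([0,L])$ is compact, being the continuous image of a compact interval, so I would extract a finite subcover $\{B(\gamma(s_k),\epsilon_{\gamma(s_k)}/2)\}_{k=1}^{M}$ of the open cover $\{B(z,\epsilon_z/2) : z\in\gamma([0,L])\}$, and use a standard Lebesgue-number argument to obtain $\delta>0$ such that any two points of $\gamma$ at distance less than $\delta$ both lie in some common ball $B(\gamma(s_k),\epsilon_{\gamma(s_k)})$.

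Now I would choose a partition $0=t_0<t_1<\cdots<t_N=L$ with $t_{i+1}-t_i<\delta$ for every $i$. Writing $z_i:=\gamma(s_{k(i)})$ for a center whose ball contains both $\gamma(t_i)$ and $\gamma(t_{i+1})$, both are within $\epsilon_{z_i}$ of $z_i$, so the local Lipschitz bound at $z_i$ combined with the triangle inequality gives
\[
\dist(f(\gamma(t_i)),f(\gamma(t_{i+1}))) \;\leq\; (C+\eta)\,\dist(\gamma(t_i),\gamma(t_{i+1})) \;\leq\; (C+\eta)(t_{i+1}-t_i).
\]
Telescoping via the triangle inequality and summing over $i$ yields
\[
\dist(f(x),f(y)) \;\leq\; \sum_{i=0}^{N-1}\dist(f(\gamma(t_i)),f(\gamma(t_{i+1}))) \;\leq\; (C+\eta)\,L \;=\; (C+\eta)\,\dist(x,y).
\]
Letting $\eta\to 0^+$ delivers the claim.

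The main obstacle I foresee is a purely bookkeeping one: ensuring that the local Lipschitz bound at $z_i$ can legitimately be applied to the pair $(\gamma(t_i),\gamma(t_{i+1}))$ even though neither of these points is $z_i$ itself. The Lebesgue-number/compactness step above is what handles this cleanly, turning a pointwise bound at each $z$ into a uniform local bound along the curve. A minor subtlety to double-check is the degenerate case in which two consecutive partition points coincide in the metric (e.g.\ if a coordinate is constantly zero along a stretch of $\gamma$, leading to subtleties in the Riemannian structure of \cref{def:finsler}); in all such cases the corresponding summand is $0$ and no harm is done.
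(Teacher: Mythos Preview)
Your strategy is the right one, but there is a genuine gap at the step you flagged as ``purely bookkeeping.'' From the local bound at $z_i$ you only know
\[
\dist(f(z_i),f(w)) \leq (C+\eta)\,\dist(z_i,w)\quad\text{for } \dist(z_i,w)\le\epsilon_{z_i},
\]
so routing through $z_i$ via the triangle inequality gives
\[
\dist\bigl(f(\gamma(t_i)),f(\gamma(t_{i+1}))\bigr)\le (C+\eta)\Bigl[\dist(\gamma(t_i),z_i)+\dist(z_i,\gamma(t_{i+1}))\Bigr],
\]
and the bracket is \emph{at least} $\dist(\gamma(t_i),\gamma(t_{i+1}))$, not at most. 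In arc-length parameters the bracket equals $(t_{i+1}-t_i)+2\,\mathrm{dist}(s_{k(i)},[t_i,t_{i+1}])$, and the Lebesgue-number argument gives no control on that overshoot; summing over $i$ and refining the mesh does not kill it either, since the number of terms grows.

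The repair is to apply the local bound with one of the two compared points as the centre. One clean way: cover $[0,L]$ by the intervals $I_t=(t-\epsilon_{\gamma(t)},\,t+\epsilon_{\gamma(t)})$, take a minimal finite subcover at $t_1<\cdots<t_M$ so that consecutive $I_{t_j}$ overlap, pick $u_j\in I_{t_j}\cap I_{t_{j+1}}$ with $t_j<u_j<t_{j+1}$, and use the partition $0,t_1,u_1,t_2,u_2,\ldots,t_M,L$. On each subinterval one endpoint is a centre $t_j$, so the local bound applies directly with no detour, and the telescoping sum gives exactly $(C+\eta)L$. The paper takes a different but equivalent route: it defines $\psi(t)=\dist(f(x),f(\gamma(t)))$, sets $\sigma=\sup\{t:\psi(t)\le(C+\delta)t\}$, and shows $\sigma=L$ by applying the local bound at $\gamma(\sigma)$ itself, again avoiding any third-point detour.
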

\begin{proof}
See \cref{sec:P1}.
\end{proof}

With the metric structure clarified, we further characterize the condition number from \cref{def:mu}. Recall that for $A\in\R^{n\times m}$ with singular value decomposition $A=U\Sigma V^T$, the Moore--Penrose pseudoinverse of $A$ can be defined as $A^\dagger=V \Sigma^\dagger U^T \in\R^{m\times n}$ where $\Sigma^\dagger$ is obtained by transposing $\Sigma$ and changing the non-zero elements $\sigma_i$ of $\Sigma$ to $\sigma_i^{-1}$. The pseudoinverse of the zero matrix $0 \in \R^{n\times m}$ is $0 \in \R^{m\times n}$.

\begin{proposition}[Condition number in coordinatewise relative error]\label{prop_cond_expression}
Let $f : S \to \R^n$ be a differentiable map with $S\subset\R^m$ open, and let the connected components $U_i\subset \R^m$ be as in \cref{def:finsler}. Let $\kappa(f,x)$ be the condition number from \cref{def:mu} with respect to the coordinatewise relative error metric on both the domain and codomain. Then, the following holds:
\begin{enumerate}

\item[(i)] if $x\in S$ and there exists an open neighborhood $N$ of $x$ such that $f(N)$ is contained in one connected component of $\R^n$, then
\begin{align} \label{eqn_mu_diff}
 \kappa(f,x) = \| \diag({f(x)})^\dagger \deriv{x}{f} \diag(x) \|_2,
\end{align}
where $\|\cdot\|_2$ is the spectral $2$-norm, $\deriv{x}{f}$ is the derivative of $f$ at $x$,
and $\diag(z)$ is the diagonal matrix whose entries are the elements of a vector $z\in\R^d$. In particular, if $f$ takes an open neighborhood of $x$ to a constant, then $\kappa(f,x)=0$.

\item[(ii)] in all other cases $\kappa(f,x)=\infty$.
\end{enumerate}
\end{proposition}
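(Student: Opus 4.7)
The proof splits into cases (i) and (ii) and exploits the explicit form of the Riemannian metric from \cref{def:finsler} in case (i).

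For (ii), the plan is contrapositive. If $\kappa(f,x) < \infty$, then there exist $\epsilon, C > 0$ such that for all $y \in S$ with $0 < \dist(x,y) \leq \epsilon$, $\dist(f(x),f(y)) \leq C\epsilon < \infty$, and so $f(y)$ must lie in the same connected component of $\R^n$ as $f(x)$. The open $\dist$-ball $B_\epsilon(x)=\{y\in S:\dist(x,y)<\epsilon\}$ is therefore an open neighborhood of $x$ (in the topology induced by $\dist$, equivalently an open subset of $U_j\cap S$ for the component $U_j\ni x$) whose image under $f$ lies in a single connected component. Hence negating the hypothesis of (i) forces $\kappa(f,x)=\infty$.

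For (i), I shrink $N$ so that $N \subseteq U_j \cap S$ and $f(N) \subseteq U_k$ for the components $U_j \ni x$ and $U_k \ni f(x)$. Because $f(N)\subseteq U_k$, every coordinate $i \notin \chi(f(x))$ satisfies $f_i \equiv 0$ on $N$, which (restricting to the $U_j$-directions already accessible through $\diag(x)$) makes the $i$-th row of $\deriv{x}{f}\diag(x)$ vanish; likewise the $i$-th column of $\diag(x)$ vanishes whenever $i\notin \chi(x)$. The Riemannian metrics at $x$ and $f(x)$ give norms $\|v\|_x=\|\diag(x)^{\dagger}v\|_2$ for $v\in T_xU_j=\{v\in\R^m:v_i=0\ \forall\,i\notin\chi(x)\}$ and $\|w\|_{f(x)}=\|\diag(f(x))^{\dagger}w\|_2$ for $w\in T_{f(x)}U_k$. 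Since on each $\R$-factor the distance is $|\log(b/a)|=|b-a|/|a|+O((b-a)^2/a^2)$ for $a,b$ of the same sign, the product structure on $U_j$ yields the first-order expansion $\dist(x,y)=\|\diag(x)^{\dagger}(y-x)\|_2(1+o(1))$ as $y\to x$ in $U_j$, and similarly for $\dist(f(x),f(y))$ in $U_k$.

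Combining with differentiability, $f(y)=f(x)+\deriv{x}{f}(y-x)+o(\|y-x\|)$, yields
\[
\frac{\dist(f(x),f(y))}{\dist(x,y)}
=\frac{\|\diag(f(x))^{\dagger}\deriv{x}{f}(y-x)\|_2}{\|\diag(x)^{\dagger}(y-x)\|_2}+o(1).
\]
Under the substitution $v=\diag(x)^{\dagger}(y-x)$, which bijects small neighborhoods of $0$ in $T_xU_j$ with small neighborhoods of $x$ in $U_j$ and satisfies $y-x=\diag(x)v$ (because $(y-x)_i=0$ for $i\notin\chi(x)$), the right-hand side becomes $\|\diag(f(x))^{\dagger}\deriv{x}{f}\diag(x)v\|_2/\|v\|_2+o(1)$. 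Taking the supremum over directions $v$ and letting $\epsilon\to 0^+$ recovers the spectral norm $\|\diag(f(x))^{\dagger}\deriv{x}{f}\diag(x)\|_2$, proving \cref{eqn_mu_diff}; the constancy special case is immediate since $\deriv{x}{f}=0$. The main delicacy is ensuring that the $o(1)$ term is uniform in the direction of $y-x$ on a small enough ball so that the supremum can be interchanged with the limit, which follows from the uniform bounds in the Taylor remainders of $\log$ on a sufficiently small neighborhood of each nonzero coordinate of $x$ and $f(x)$.
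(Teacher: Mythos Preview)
Your proof is correct and follows essentially the same approach as the paper: both arguments expand the log-distance ratio to first order via Taylor's theorem and then identify the resulting quotient of Euclidean norms with the spectral norm through the change of variables $y-x=\diag(x)v$ (the paper writes it as $y=\diag(x)z$). Your treatment is slightly more explicit about why the pseudoinverse handles the zero coordinates correctly and about the uniformity of the $o(1)$ remainder needed to exchange $\sup$ and $\lim$, points the paper's proof glosses over.
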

\begin{proof}
See \cref{sec:Pm1}.
\end{proof}
\begin{remark}
Item (ii) in \cref{prop_cond_expression} corresponds to $x\neq0$, $f_i(x)=0$ for some $i$ and yet $f_i(y)\neq0$ for some $y$ arbitrarily close to $x$. In all other cases, recalling also that $0$ is an isolated point in our chosen topology, item (i) applies, and then the presence of the Moore--Penrose pseudoinverse in \eqref{eqn_mu_diff} implies that only the non-zero components of $x$ and $f(x)$ contribute to $\kappa(f,x)$.
\end{remark}

Two important special cases of equation \cref{eqn_mu_diff} arise for $m=1$ and $n=1$, simplifying to respectively
\[
\kappa(f,x) = |x| \sqrt{ \sum_{i\in\chi(f(x))} \left( \frac{f_i'(x)}{|f_i(x)|} \right)^2 } \quad\text{and}\quad
\kappa(f,x) = \frac{1}{|f(x)|} \sqrt{\sum_{i\in\chi(x)} \left( x_i \frac{\partial f}{\partial x_i} \right)^2}.
\]
For univariate scalar-valued functions, i.e., $m=n=1$, both reduce to the familiar expression of the relative condition number
\begin{align} \label{eqn_usual_rel_cond}
\kappa(f,x)
=\frac{|x|\cdot |f'(x)|}{|f(x)|}.
\end{align}

\begin{remark}
  It follows from \cref{prop_cond_expression} that, if $f_i(x)=0$ for some $i$, then either $\kappa(f,x)=\infty$ or $f_i(x)=0$ in a neighborhood of $x$. In particular, this clarifies the special values in the renowned formula \eqref{eqn_usual_rel_cond}:
  \begin{itemize}
    \item If $x=0$ then $\kappa(f,x)=0$, regardless of the value of $f'(x)$ and $f(x)$.
    \item If $x\neq 0$ and $f'(x)=0$ in an open neighborhood of $x$, then $\kappa(f,x)=0$.
    \item If $x\neq0$, $f(x)=0$ but $f'(x)$ is not constantly $0$ in an open neighborhood of $x$, then $\kappa(f,x)=\infty$ even if $f'(x)=0$
  \end{itemize}
\end{remark}

\section{Numerical stability of algorithms}\label{sec:floatingpoint}

This section briefly states the precise definitions of forward, mixed and backward stability of numerical algorithms for a computational problem that we use in this paper.
Informally, an algorithm is a sequence of instructions that can be programmed in any programming language, like C, Julia, Matlab, or Python. Several of these programming languages offer multiple floating-point types, such as half, single, and double precision. Until recently, mathematical software was usually implemented in one fixed precision. This paradigm is starting to change \cite{bf2000,br14,cahi18,fahi18,fahi19,high17-multiprec,hilu20,Everyone2020} as we currently witness an expanding use of both high-precision (for enhanced accuracy) and low-precision (for enhanced speed) arithmetic; {sometimes, different precisions may even be used for different steps in the same algorithm.}
Therefore, the definition of numerical algorithm and stability below will not depend on any fixed precision, but rather holds for all sufficiently large precisions simultaneously.

In the next section, we use these definitions to prove a composition theorem for forward stability in \cref{th:composition}.

\subsection{Numerical algorithm}\label{sec_floating_point_arithmetic}\label{sec:algorithm}
Our analysis assumes that the standard model of floating-point arithmetic \cite{Higham1996} is adopted. For completeness, it is described in \cref{appendix:bss}.
The \emph{unit roundoff} of the floating-point number system $\F_u\subseteq\R$ is denoted by $u$. We also write $u = 2^{-t}$, where $t>2$ is the \emph{precision}. To prove our main results, we need a relaxation of the standard floating-point system: we assume that that the smallest possible exponent in the system is $\emin=-\infty$ and the largest possible exponent is $\emax=\infty$.

The standard floating-point system defines a \textit{roundoff map} $\fl : \R \to \F_u$ that takes $x \in\R$ to the number in $\F_u$ that is closest to $x$ in the Euclidean distance. Ties can be broken by the usual schemes, such as rounding to odd, to even, or away from $0$. Our results do not depend on this choice. From now on we assume that maps $\fl$ exist for all $0<u<\frac{1}{4}$.


Since our aim is to prove a composition theorem for numerical algorithms in arbitrary precision, we should agree on what constitutes an ``an algorithm for a computational problem,'' mostly to exclude pathological cases.
The definition of algorithm we adopt in this paper is the \emph{BSS machine}, proposed by Blum, Shub, and Smale \cite{BSS1989} and further developed in \cite{BCSS1998}; for completeness it is recalled in \cref{appendix:bss}. 
Following \cite{cucker2015,MalajovichShub}, for the purpose of this paper, a \textit{numerical algorithm} is a BSS machine where so-called \emph{approximate computations} \cite{cucker2015,MalajovichShub} are obtained by executing it in the standard model of floating-point arithmetic; for completeness, the standard details are included in \cref{appendix:bss}.

\subsection{Scalable functions}

Recall that the BSS model takes into account that many algorithms, especially those in numerical linear algebra, are \textit{scalable}: they can be applied to problems of arbitrary dimension.
In practice, mathematical algorithms like computing matrix factorizations, e.g., QR, LU, and SVD, of $n\times n$ matrices are usually implemented  for all $n\geq1$, rather than having separate implementations for each $n \in \N$. We will refer to the corresponding family of functions, which have input and output of varying dimensions, as a \emph{scalable function}.

\begin{definition}[A scalable function]
	A scalable function is of the form
	\[
	f:\bigcupdot_k S_k\to \bigcupdot_k \R^{n_k},
	\]
	where $S_k\subseteq\R^{m_k}$ and $m_k,n_k\in\N$ are sequences, either both infinite or both finite and with the same length, and $\cupdot$ denotes the disjoint union of sets.

 When applied to $x\in S_k$ the function can be denoted by $f_k:S_k\to \R^{n_k}$, but for brevity we simply write $f(x)$. From now on we denote $M_k=\max(m_k,n_k)$, that is the maximum of the dimension of the input and the output for every $k$, since in practice an element of $S_k$ is represented by a vector in $\R^{m_k}$.
\end{definition}

\subsection{Backward, mixed, and forward stability}\label{sec_numerical_stability}

We denote a specific BSS machine that attempts to implement a function $f$ by $\hat{f}$. The output (if any) of the corresponding numerical algorithm with unit roundoff $u$ on input $x$ is denoted by $\hat{f}^u(x)$. This emphasizes that the algorithm tries to approximate a function $f$ and that the computations are carried out in $\F_u$.

Numerical stability was introduced to classify which numerical algorithms $\hat{f}^u$ can be claimed to implement a given function $f$ reliably.
In the context of scalable functions and numerical algorithms with arbitrary precision $t$, we believe that the magnitude of errors should be viewed relative to the size of the domain $\R^{m_k}$ and codomain $\R^{n_k}$.
Consequently, having defined $M_k=\max \{m_k,n_k\}$, the definitions of numerical stability below allow errors to grow with $M_k$, but at most polynomially.

\begin{remark}
	We will use ``constants'' depending polynomially on $M_k$. We tacitly assume without loss of generality that all these polynomials in $M_k$ are greater than $4$ and monotonically non-decreasing.
	We did not optimize our proofs to yield the smallest possible polynomials, rather preferring clarity of exposition.
\end{remark}

Backward and mixed stability \cite[Section 1.5]{Higham1996} are defined independently of the condition number of the computational problem.

\begin{definition}[Backward stability]\label{def_backward_stability}
	Let $f : \cupdot_k S_k \to \cupdot _k \R^{n_k}$ be a scalable function and $\hat{f}$ a BSS machine. Then, $\hat{f}$ is a \emph{backward stable} algorithm for $f$ if there exist polynomials $a$ and $b$ such that for all $x \in S_{k}$,
	\[
	0 < u \leq \frac{1}{a(M_k)} \Rightarrow \exists y : \hat{f}^u(x) = f(y) \;\text{ and }\; \dist(x,y) \leq b(M_k) u.
	\]
	In particular, an output must be produced for all such choices of $x$ and $u$.
\end{definition}

\begin{definition}[Mixed stability] \label{def_mixed_stability}
	Let $f : \cupdot_k S_k \to \cupdot _k \R^{n_k}$ be a scalable function and $\hat{f}$ a BSS machine. Then, $\hat{f}$ is a \emph{mixed stable} algorithm for $f$ if there exist polynomials $a$, $b$, and $c$ such that for all $x \in S_{k}$
	\[
	0<u \leq \frac{1}{a(M_k)} \Rightarrow \exists y : \dist(\hat{f}^u(x),f(y)) \leq b(M_k) u \;\text{ and }\; \dist(x,y) \leq c(M_k) u.
	\]
	In particular, an output must be produced for all such choices of $x$ and $u$.
\end{definition}

The final classic notion of stability is forward stability, which just says that an algorithm is stable if its output $\hat f^u(x)$ is close to the exact value $f(x)$, with the caveat that this distance may depend linearly on the condition number at $x$.
\begin{definition}[Forward stability] \label{def_fs}
	Let $f: \cupdot_k S_k \to \cupdot _k \R^{n_k}$ be a scalable function and $\hat{f}$ a BSS machine. Then, $\hat{f}$ is a \emph{forward stable} algorithm implementing $f$ if there exists a \emph{stability polynomial} $a$ such that either it holds for all $x \in S_{k}$ that
	\[
	0 < u \leq \frac{1}{\tilde{\kappa}(f,x) a(M_k)} \;\Rightarrow\;  \dist(\hat{f}^u(x),f(x)) \leq a(M_k) \tilde{\kappa}(f,x)  u,
	\]
	or, equivalently, it holds for all $x \in S_{k}$ and all $\epsilon \in [0, 1]$ that
\[
0 < u \leq \frac{\epsilon}{\tilde{\kappa}(f,x) a(M_k)} \;\Rightarrow\; \dist(\hat{f}^u(x),f(x)) \leq  \epsilon.
\]
 In particular, an output must be produced for all such choices of $x$ and $u$.
If $\tmu(f,x)=\infty$ and $u\ne 0$, then the foregoing implications are vacuous and a forward stable algorithm may either output any value, or not halt and output no value at all.
\end{definition}

It it is not hard to verify that the two conditions in \cref{def_fs} are equivalent. The reason for giving two alternatives is twofold. First, each of them will be useful in our analyses. Second, they carry a different philosophy: the first criterion shows that given any sufficiently small unit roundoff $u$ a forward stable algorithm guarantees a certain accuracy, while the other shows that for every wanted accuracy $\epsilon$ one can find a unit roundoff such that a forward stable algorithms achieves the required accuracy.

The reason to use $\tmu$ instead of $\kappa$ in \cref{def_fs} is that for some problems $\kappa$ may be equal to $0$ or a very small number, and thus asking the error to depend linearly on $\kappa$ is just too much to be a realistic demand. We believe that most of the algorithms usually regarded as backward, mixed or forward stable in numerical analysis satisfy \cref{def_backward_stability}, \ref{def_mixed_stability} or \ref{def_fs}, respectively.

\section{Stability of numerical algorithms for amenable problems}\label{sec_main}

This section introduces the five main innovations of this paper. First, we present the class of \textit{amenable} computational problems. These are scalable functions that simultaneously admit a non-uniform bounded growth of the condition number and are well-behaved near the boundaries of their domains. Second, we propose a \textit{compatibility condition} for amenable functions $g$ and $h$, under which we can prove amenability of the composition $g \circ h$. Third, we prove the main theorem: composing two forward stable numerical algorithms $\hat{g}^u$ and $\hat{h}^u$ that respectively implement compatible amenable functions $g$ and $h$ results in a forward stable algorithm $\hat{g}^u \circ \hat{h}^u$ implementing the amenable function $g \circ h$. Fourth, under amenability a helpful chain of implications arises wherein backward implies mixed implies forward stability. Fifth, for a \textit{differentiable} problem $f : \R^m \to \R^n$, proving forward stability of $\hat{f}^u$ in the relative error metric reduces to establishing the stability of all component functions $\hat{f}^u_i$ for computing the $f_i$'s.

\subsection{The amenability condition}
Our main goal is to facilitate the composition of stable numerical algorithms, resulting in a new stable algorithm. Consider two scalable functions $g$ and $h$ that can be composed. We identified three obstacles that seem to prevent unbridled composition of forward stable numerical algorithms $\hat{g}^u$ and $\hat{h}^u$ implementing respectively $g$ and $h$:
\begin{enumerate}
 \item $\hat{g}^u \circ \hat{h}^u$ is no longer well-defined for some or even all $u$;
 \item the condition number of either $g$ or $h$ grows uncontrollably;
 \item the maximum of the condition numbers $\kappa(g,h(x))$ and $\kappa(h,x)$ is significantly larger than $\kappa(g \circ h,x)$.
\end{enumerate}

The issue with the first item is clear. The second obstacle may prevent the condition number from being useful since we want the condition number (a first order variation estimator) to provide reasonable bounds for moderately small values of $u$.
 The third obstacle is behind the instability of the naive method to solve an overdetermined least-squares problem $C\mathbf{x}=\mathbf{d}$ discussed in the introduction, and  was even exploited in \cite{NT2016} to prove forward instability of certain resultant-based methods to solve systems of polynomial equations, and in \cite{BBV2019} to prove forward instability of pencil-based methods for computing tensor rank decompositions.

Observe that the second and third obstacles are formulated independently of the algorithms $\hat{g}^u$ and $\hat{h}^u$. Remarkably, the first obstacle can also be avoided by placing suitable restrictions \emph{only} on the functions $g$ and $h$. The central idea of our notion of amenability is to prevent the occurrence of the first two obstacles.

\begin{definition}[Amenable function]\label{def_amenable}
	A scalable function $f$ is \emph{amenable} if there exists an \emph{amenability polynomial} $a$ such that:
	\begin{enumerate}
		\item[(A.1)]
		For all $x\in S_k$, the ball
		\[
		B_x=\left\{y\in\R^{m_k}:\dist(x,y)\leq \frac{1}{a(M_k)\tmu(f,x)}\right\}
		\]
		is contained in $S_k$.
		\item[(A.2)] For all $y\in B_x$ we have $\tmu(f,y)\leq a(M_k)\tmu(f,x)$.
	\end{enumerate}
\end{definition}
\begin{remark}\label{rmk:dimensions}
Sometimes we want to deal with functions that are not scalable.
In these cases we still talk about amenability but the polynomial $a$ becomes just a constant. The same applies if a function is defined on $\cupdot_k S_k$ where $k$ runs over a finite set.
\end{remark}

For all points $x\in S_k$ where $\tmu(f,x)=\infty$, the {two conditions of Definition \ref{def_amenable}} are automatically satisfied. Thus, it suffices to verify amenability for all points $x$ where $\tmu(f,x)$ is finite.

The following lemma will be helpful to check if a given function is amenable.

\begin{lemma}\label{lem:Gronwall}
	Given a scalable function $f : \cupdot_k S_k \to \cupdot_k \R^{n_k}$,
assume that there is a polynomial $a$ such that for all $k\in\N$ the following properties hold:
	\begin{enumerate}
		\item[(i)] Let $(x_0, x_1,x_2,\ldots)\subseteq S_k$ be a sequence such that
		\[
		\dist(x_j,\partial S_k\cup\mathcal{I}_k)\to0,
		\]
		where $\partial S_k$ is the boundary
		of $S_k$ and $\mathcal{I}_k = \{x\in S_k:\kappa(f,x)=\infty\}$ is the ill-posed locus. Then, $\kappa(f,x_j)\to\infty$.
		\item[(ii)]  Let  $V_k = S_k\setminus\mathcal{I}_k$ be the set where the condition number is finite and let $\tmu_f:V_k\to \R,\, x \mapsto \tmu(f,x)$. The condition number of $\tmu_f$ satisfies
	\begin{equation}\label{eq:alternative}
	\kappa(\tmu_f,x)\leq  \frac{a(M_k)}{4}\tmu(f,x),\quad \forall x\in S_k.
	\end{equation}
	\end{enumerate}
	Then, $f$ is amenable with amenability polynomial $a$.
\end{lemma}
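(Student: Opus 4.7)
The plan is to establish (A.1) and (A.2) simultaneously via a continuity (bootstrap) argument along unit-speed geodesic rays emanating from $x$. Fix $x \in S_k$ and set $\phi_0 := \tmu(f,x)$; if $\phi_0 = \infty$ both conditions are vacuous, so assume $\phi_0 < \infty$ and set $r := 1/(a(M_k)\phi_0)$, the radius of $B_x$. Each connected component $U_j \subseteq \R^{m_k}$ of the coordinatewise relative error topology is a complete Riemannian manifold (coordinate hyperplanes lie at infinite distance), so by Hopf--Rinow every unit-speed geodesic $\gamma\colon[0,\infty)\to U_j$ with $\gamma(0)=x$ exists for all time, and every $y$ with $\dist(x,y)\le r$ is reached by some such ray at parameter $L=\dist(x,y)\le r$. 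Applied to the constant sequence $\{x\}$, hypothesis~(i) in contrapositive form also yields $\dist(x, \partial S_k \cup \mathcal I_k) > 0$ whenever $\kappa(f,x) < \infty$, so $V_k := S_k \setminus \mathcal I_k$ is open in $\R^{m_k}$.

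Fix a ray $\gamma$ and let $T := \sup\{t \ge 0 : \gamma([0,t]) \subseteq V_k\}$; openness of $V_k$ gives $T > 0$. On $[0, T)$ the scalar $\phi(s) := \tmu(f, \gamma(s))$ is finite; moreover hypothesis~(ii) forces $\kappa(\tmu_f, \cdot) < \infty$ on $V_k$, so $\tmu_f$ is continuous on $V_k$ and $\phi$ is continuous on $[0, T)$. Set $s^* := \sup\{ s \in [0, \min(T, r)] : \phi(t) \le 2\phi_0 \text{ for all } t \in [0, s] \}$. On $[0, s^*]$, hypothesis~(ii) combined with the a priori bound yields $\kappa(\tmu_f, \gamma(t)) \le \tfrac{a(M_k)}{4}\phi(t) \le \tfrac{a(M_k)}{2}\phi_0$. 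Since every sub-arc of a minimizing geodesic is itself minimizing between its endpoints, \cref{lem:geodesicas} applied to $\tmu_f$ on $\gamma|_{[0,s^*]}$ gives
\[
\dist(\phi_0, \phi(s^*)) \le \tfrac{a(M_k)}{2}\phi_0 \cdot s^* \le \tfrac{a(M_k)}{2}\phi_0 \cdot r = \tfrac12.
\]
Because $\dist$ on $\R_{>0}$ equals $|\log(\cdot/\cdot)|$, this yields $\phi(s^*) \le \phi_0 e^{1/2} < 2\phi_0$ \emph{strictly}; continuity of $\phi$ then propagates $\phi < 2\phi_0$ slightly past $s^*$ unless $s^* = \min(T, r)$. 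Therefore $s^* = \min(T, r)$ and $\phi \le 2\phi_0$ on all of $[0, \min(T, r))$.

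It remains to rule out $T < r$. If $T < r$, then $\phi \le 2\phi_0$ on $[0, T)$, so by the contrapositive of (i), $\dist(\gamma(s), \partial S_k \cup \mathcal I_k) \ge \delta$ for some $\delta > 0$ uniform in $s \in [0, T)$. Passing to the limit, $\gamma(T)$ is $\delta$-separated from $\partial S_k \cup \mathcal I_k$, hence lies in $V_k$; by openness of $V_k$, the geodesic continues past $T$ inside $V_k$, contradicting maximality. Hence $T \ge r$, establishing (A.1); and the bootstrap bound $\tmu(f, \gamma(s)) \le 2\phi_0 \le a(M_k)\phi_0$ on $[0, r]$ (using $a(M_k) \ge 4$) establishes (A.2).

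The delicate step is the bootstrap itself: hypothesis~(ii) is a \emph{relative} estimate, with $\kappa(\tmu_f, \cdot)$ controlled by $\tmu(f, \cdot)$ rather than by a fixed constant, so \cref{lem:geodesicas} cannot be applied directly and must be coupled with a pointwise a priori bound on $\phi$. The continuity argument closes this loop by upgrading the a priori bound $\phi \le 2\phi_0$ into the strict a posteriori bound $\phi < 2\phi_0$, which then propagates. A secondary subtlety is transferring the conclusion from ``geodesic arcs that stay in $V_k$'' to the intrinsic metric ball $B_x$; here, hypothesis~(i) used in contrapositive form is exactly what ensures that a geodesic along which $\phi$ remains bounded cannot leak out of $V_k$.
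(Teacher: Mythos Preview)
Your proof is correct and follows essentially the same strategy as the paper: a Gr\"onwall-type continuity argument along minimizing geodesics, invoking \cref{lem:geodesicas} on $\tmu_f$ together with hypothesis~(ii) to bound $\tmu(f,\cdot)\le 2\tmu(f,x)$ along the geodesic, and using hypothesis~(i) in contrapositive form to prevent the geodesic from escaping $V_k$. The only cosmetic difference is in how the bootstrap is closed: the paper defines $C(s)=\sup_{[0,s)}\tmu(f,\gamma(\cdot))$, derives the self-referential inequality $\log\bigl(C(s)/\tmu(f,x)\bigr)\le C(s)/(4\tmu(f,x))$, and notes that $\{\alpha>0:\log\alpha\le\alpha/4\}\subseteq(0,2]\cup[8,\infty)$ is disconnected so the connected image of $C/\tmu(f,x)$ must lie in $(0,2]$; you instead run an explicit open--closed argument on $s^*$ and use the strict gain $e^{1/2}<2$ to propagate.
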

\begin{proof}
See \cref{sec:P2}.
\end{proof}

\begin{remark}\label{rmk:lips}
	If $\kappa(f,x)$ is given by a smooth formula, then \eqref{eq:alternative} is satisfied if
	\begin{equation}\label{eq:alternative2}
	\sqrt{ \sum_{i} \left(x_i\frac{\partial \kappa}{\partial x_i}\right)^2 }
	\leq q(M_k)\tmu(f,x)^2,
	\end{equation}
	for some polynomial $q$.
\end{remark}

It follows immediately in the notation of \cref{lem:Gronwall} that each $V_k$ is an open subset of $\R^{m_k}$ and that the restriction $\kappa\mid_{V_k}$ is continuous for all $k\in\N$.

Note that \cref{eq:alternative2} essentially bounds the gradient of the condition number $\kappa(f,x)$ as the square of the latter. Many condition numbers in linear algebra satisfy such a bound \cite{Demmel1987,DesHigham1995}.

As said in \cref{sec:cond}, once we have endowed $\R^d$ with the coordinatewise relative metric, its induced topology has $3^d$ connected components. Fortunately, our definition of amenability takes care of that, as can be easily proved.

\begin{lemma}\label{lem:domainaditivity}
	Let $\R^d$ be endowed with any metric and consider its metric topology. If $f|_{S_\alpha}$ is amenable for a finite indexed collection of open sets $S_\alpha\subseteq\R^d$, then $f|_S$ is amenable, where
	\(
	S=\bigcup_\alpha S_\alpha.
	\)
Moreover, if all the $f|_{S_\alpha}$ admit the same amenability polynomial, then so does $f|_S$, even if $\alpha$ runs through an infinite set.
\end{lemma}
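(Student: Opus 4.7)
The plan is to construct a single amenability polynomial $a$ for $f|_S$ out of the individual polynomials $a_\alpha$ of the restrictions $f|_{S_\alpha}$, and then to verify conditions (A.1) and (A.2) at each $x \in S$ by picking an index $\alpha$ with $x \in S_\alpha$ and inheriting the local behaviour from the already-known amenability of $f|_{S_\alpha}$.

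First, in the finite case I would set $a = \sum_\alpha a_\alpha$, which is again a polynomial (or a constant, in the non-scalable setting of \cref{rmk:dimensions}) and satisfies $a \geq a_\alpha$ pointwise for every $\alpha$. In the infinite case with a common polynomial the same polynomial is simply reused. The key observation I would rely on is a monotonicity: enlarging $a$ only shrinks the amenability ball $B_x = \{y : \dist(x,y) \leq 1/(a(M_k)\tmu(f,x))\}$, so any inclusion or bound established with $a_\alpha$ is automatically inherited by $a$.

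Next, fixing $x \in S$ and any $\alpha$ with $x \in S_\alpha$, condition (A.1) for $f|_S$ follows because the $a$-ball at $x$ is contained in the (possibly larger) $a_\alpha$-ball at $x$, which by amenability of $f|_{S_\alpha}$ lies inside $S_\alpha \subseteq S$. For (A.2), every $y$ in the $a$-ball lies in the $a_\alpha$-ball, so amenability of $f|_{S_\alpha}$ yields $\tmu(f,y) \leq a_\alpha(M_k)\tmu(f,x) \leq a(M_k)\tmu(f,x)$. The degenerate case $\tmu(f,x) = \infty$ needs no separate treatment, by the observation just after \cref{def_amenable}.

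The main (small) delicacy I anticipate is an implicit identification of condition numbers between the two domains: $\kappa(f,y)$ is defined in \cref{def:mu} as a limit of suprema over neighborhoods of $y$ inside the domain, so \emph{a priori} $\kappa(f|_{S_\alpha},y)$ and $\kappa(f|_S,y)$ could differ. However, openness of $S_\alpha$ forces $y \in B_x \subseteq S_\alpha$ to be an interior point of $S_\alpha$, whence for all sufficiently small radii both suprema range over the same set and the two condition numbers agree. This is the only place the openness hypothesis enters, and the infinite-index case is settled by exactly the same argument after skipping the construction of $a$.
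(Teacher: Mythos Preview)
The paper does not prove this lemma, saying only that it ``can be easily proved.'' Your argument is correct and is the natural one: dominate the finitely many $a_\alpha$ by a single polynomial (or reuse the common one in the infinite case), nest the amenability balls accordingly, and inherit (A.1)--(A.2) from whichever $S_\alpha$ contains $x$; your observation that openness of $S_\alpha$ is precisely what forces $\kappa(f|_{S_\alpha},\cdot)=\kappa(f|_S,\cdot)$ locally is the only subtle point and you have handled it correctly.
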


\subsection{The compatibility condition}
The third obstacle for stably composing numerical algorithms is decomposing a well-conditioned computational problem $f$ into a composition $f = g \circ h$ where either $g$ or $h$ is poorly conditioned (compared to $f$). This can lead to a forward unstable algorithm \cite{NT2016,BBV2019}. Based on this observation, we propose the following compatibility condition that excludes this possibility.

\begin{definition}[Compatibility condition] \label{def_compatible}
Let $g,h$ be scalable functions such that the composition $g \circ h$ is well defined. Graphically, for $k\geq0$
\[
\begin{matrix}
  S_k\subseteq\R^{m_k} &\stackrel{h}\to& T_k\subseteq\R^{n_k} &\stackrel{g}\to& \R^{p_k} \\
  x                    &\mapsto        &h(x)                  &\mapsto        &g(h(x)).
\end{matrix}
\]
We say that $g,h$ are \emph{compatible} if there exist \emph{compatibility polynomials} $b,c$ such that:
\begin{enumerate}
  \item For all $k$, we have $n_k \leq b(M_k)$, where $M_k=\max(m_k,p_k)$. In other words, the intermediate dimension is polynomially bounded by the maximum of the input and the output dimensions of the composition.
  \item For all $k$ and $\forall x\in S_k$ there holds
\begin{equation}\tag{C}\label{eq:reves}
\tmu(g,h(x))\tmu(h,x) \leq c(M_k) \tmu(g\circ h,x).
\end{equation}
That is, \cref{eq:boundmu} can be reverted up to a factor which is polynomial in $M_k$.
\end{enumerate}
\end{definition}

The compatibility condition guarantees that the composition of two amenable functions is again amenable, as is shown next.

\begin{proposition} \label{prop_compose_amenable}
 If $g$ and $h$ are compatible amenable functions, then $g \circ h$ is also amenable.
\end{proposition}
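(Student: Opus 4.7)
The plan is to build an amenability polynomial for $f=g\circ h$ by chaining the amenability of the two factors, using the composition inequality of \cref{lem:mucompupper}, the Lipschitz-type estimate of \cref{lem:geodesicas} along minimizing geodesics, and the compatibility bound \eqref{eq:reves} to translate estimates at $x$ into estimates at $h(x)$ and $y$.

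First I would set up notation: with $h:S_k\subseteq\R^{m_k}\to T_k\subseteq\R^{n_k}$ and $g:T_k\to\R^{p_k}$, define $M_k=\max(m_k,p_k)$ and $N_k:=M_k+b(M_k)$, so $N_k$ polynomially bounds both the input/output dimensions of $g$ and of $h$ (using $n_k\leq b(M_k)$ from \cref{def_compatible}). Hence $a_g(N_k)$ and $a_h(N_k)$ are polynomials in $M_k$, and my candidate amenability polynomial for $f$ is
\[
a_f(M_k):=2\,a_g(N_k)\,a_h(N_k)\,c(M_k),
\]
enlarged if needed to satisfy the standing $\geq 4$ and monotonicity conventions. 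Fix $x\in S_k$. If $\tmu(f,x)=\infty$ then (A.1)--(A.2) hold vacuously, so I assume $\tmu(f,x)<\infty$; compatibility then forces both $\tmu(h,x)$ and $\tmu(g,h(x))$ to be finite (each factor is $\geq 1$ and their product is bounded by $c(M_k)\tmu(f,x)$), so amenability of $h$ at $x$ and of $g$ at $h(x)$ can both be applied.

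Next I would verify (A.1): for $y$ with $\dist(x,y)\leq 1/(a_f(M_k)\tmu(f,x))$, compatibility gives $\tmu(h,x)\leq c(M_k)\tmu(f,x)$, hence $\dist(x,y)\leq 1/(a_h(N_k)\tmu(h,x))$, placing $y$ inside the $h$-amenability ball of $x$, so $y\in S_k$. For (A.2), the key step is to apply \cref{lem:geodesicas} to the minimizing geodesic from $x$ to $y$: every point $z$ on it satisfies $\dist(x,z)\leq\dist(x,y)$ by the triangle inequality and minimality, so amenability of $h$ yields $\kappa(h,z)\leq a_h(N_k)\tmu(h,x)$ uniformly along the geodesic, and \cref{lem:geodesicas} gives $\dist(h(x),h(y))\leq a_h(N_k)\tmu(h,x)\dist(x,y)$. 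A further application of compatibility and the chosen size of $a_f$ shows this is at most $1/(a_g(N_k)\tmu(g,h(x)))$, so $h(y)$ lies in the $g$-amenability ball of $h(x)$. Amenability of $g$ and of $h$ then bound the factors in $\tmu(f,y)\leq\tmu(g,h(y))\tmu(h,y)$ (\cref{lem:mucompupper}), and one last use of compatibility delivers $\tmu(f,y)\leq a_g(N_k)a_h(N_k)c(M_k)\tmu(f,x)\leq a_f(M_k)\tmu(f,x)$.

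The main obstacle I expect is confirming that the minimizing geodesic from $x$ to $y$ actually stays inside $S_k$, as tacitly needed for \cref{lem:geodesicas}. In the metric of \cref{def:finsler} this will follow from geodesics being products of one-dimensional log-geodesics, so they never leave the connected component containing their endpoints; combined with $\dist(x,z)\leq\dist(x,y)$ the geodesic is confined to the $h$-amenability ball around $x$, which lies in $S_k$ by property (A.1) applied to $h$. The remaining bookkeeping---that $a_g(N_k)$ and $a_h(N_k)$ are polynomial in $M_k$---is exactly what the intermediate-dimension clause of \cref{def_compatible} guarantees.
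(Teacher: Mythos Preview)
Your proposal is correct and follows essentially the same route as the paper's proof: both define the amenability polynomial for $f$ as (a multiple of) the product $a_g\cdot a_h\cdot c$ evaluated at a polynomial bound for the intermediate dimension, use compatibility to shrink the $f$-ball into the $h$-amenability ball for (A.1), invoke \cref{lem:geodesicas} together with (A.2) for $h$ to push $h(y)$ into the $g$-amenability ball at $h(x)$, and finish via \cref{lem:mucompupper} and one more application of \eqref{eq:reves}. The only cosmetic differences are that the paper assumes without loss of generality $b(t)\geq t$ (so it can use $b(M_k)$ directly instead of your $N_k=M_k+b(M_k)$) and omits your harmless factor of $2$; your explicit discussion of why the minimizing geodesic remains in $S_k$ is a point the paper leaves implicit.
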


\begin{proof}
Let $a_g$ and $a_h$ be the amenability polynomials of respectively $g$ and $h$, and let $b,c$ be the polynomials appearing in the definition of compatibility. We will prove that $f=g\circ h$ is amenable with amenability polynomial $a(t) = a_g(b(t)) a_h(b(t)) c(t)$. Without loss of generality, we can assume that $b(t)\geq t$.

For any given $k$, the input and output dimensions of a function $\varphi$ are denoted by $m_k^{\varphi}, n_k^\varphi$ respectively, and $M_k^\varphi=\max \{m_k^{\varphi}, n_k^\varphi \}$. We thus have $M_k^h,M_k^g\leq b(M_k^f)$ from the compatibility, which implies
\[
a(M_k^f)\geq a_g(M_k^g)a_h(M_k^h)c(M_k^f),
\]
having exploited that all polynomials are non-decreasing by assumption.
It suffices to verify amenability for all $x\in S_k$ such that $\tmu(g\circ h,x)$ is finite. It follows from compatibility that both $\tmu(h,x)$ and $\tmu(g, h(x))$ are finite.

	Let $x\in S_k$, let $y\in \R^{m_k}$ be such that
	\[
	\dist(x,y)
	\leq \frac{1}{a(M_k^f)\tmu(f,x)}
	\stackrel{\eqref{eq:reves}}{\leq} \frac{c(M_k^f)}{a(M_k^f)\tmu(h,x)}
	\leq \frac{1}{a_h(M_k^h)\tmu(h,x)}.
	\]
	Then, $y\in S_k$ by using (A.1) for $h$. Hence, $y$ is in the domain of $f$ and (A.1) for $f$ holds. Moreover, $\tmu(h,y)\leq a_h(M_k^h)\tmu(h,x)$ by (A.2) for $h$. Similarly, for all $y\in B_x$ it follows from \cref{lem:geodesicas} that
	\begin{align*}
	\dist(h(x),h(y))
	&\leq \dist(x,y) \cdot \max_{z \in B_x}\tmu(h,z)\\
	&\leq\dist(x,y)a_h(M_k^h)\tmu(h,x)
	\leq \frac{a_h(M_k^h) \tmu(h,x)}{a(M_k^f)\tmu(f,x)}
	\stackrel{\eqref{eq:reves}}\leq\frac{1}{a_g (M_k^g) \tmu(g,h(x))},
	\end{align*}
	so that $\tmu(g,h(y))\leq a_g(M_k^g)\tmu(g,h(x))$ by (A.2) for $g$. Consequently,
	\[
	\tmu(f,y)
	\stackrel{\eqref{eq:boundmu}}{\leq} \tmu(h,y)\tmu(g,h(y))
	\leq a_h(M_k^h) a_g (M_k^g)\tmu(h,x)\tmu(g,h(x))
	\stackrel{\eqref{eq:reves}}{\leq} a(M_k^f) \tmu(f,x).
	\]
	Thus (A.2) holds for $f$ as well, concluding the proof.
\end{proof}

\subsection{The composition theorem} \label{sec:fundamental}

Now we can prove the main result of this article on the composition, or concatenation, of forward stable numerical algorithms in the setting of compatible amenable computational problems.

\begin{theorem}
\label{th:composition}
Let $g$ and $h$ be compatible amenable functions. For all forward stable algorithms $\hat{g}^u$ and $\hat{h}^u$ implementing respectively $g$ and $h$, the composition $\hat{g}^u \circ \hat{h}^u$ is a forward stable algorithm implementing $f = g \circ h$.
\end{theorem}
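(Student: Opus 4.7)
The plan is to chain the forward stability of $\hat h^u$ and $\hat g^u$ together via the triangle inequality, controlling each piece by a polynomial multiple of $\tmu(f,x) u$. Set $\tilde y = \hat h^u(x)$ and decompose
\[
\dist(\hat g^u(\tilde y), g(h(x))) \leq \dist(\hat g^u(\tilde y), g(\tilde y)) + \dist(g(\tilde y), g(h(x))).
\]
The first summand captures the stability error of $\hat g^u$ applied to the already-perturbed input $\tilde y$, and the second captures how much $g$ amplifies the stability error incurred by $\hat h^u$. The key bookkeeping device throughout is the compatibility inequality \cref{eq:reves}, used repeatedly to convert products of the form $\tmu(h,x)\tmu(g,h(x))$ into $c(M_k^f)\tmu(f,x)$; amenability (A.1) and (A.2) of $g$ handles the fact that $\tilde y \neq h(x)$.

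First, I would invoke forward stability of $\hat h^u$ at $x$, valid for $u \leq (a_h(M_k^h)\tmu(h,x))^{-1}$, to obtain $\dist(\tilde y, h(x)) \leq a_h(M_k^h)\tmu(h,x)u$. Provided this quantity is at most $(a_g(M_k^g)\tmu(g,h(x)))^{-1}$, amenability (A.1) of $g$ puts $\tilde y$ in the domain of $g$ and amenability (A.2) gives $\tmu(g,\tilde y) \leq a_g(M_k^g)\tmu(g,h(x))$. Using compatibility, the intermediate bound on $u$ is implied by $u \leq (a_h a_g c(M_k^f)\tmu(f,x))^{-1}$. Forward stability of $\hat g^u$ at $\tilde y$ then produces $\dist(\hat g^u(\tilde y), g(\tilde y)) \leq a_g(M_k^g)\tmu(g,\tilde y)u \leq a_g(M_k^g)^2\tmu(g,h(x))u$; invoking compatibility once more together with $\tmu(h,x)\geq 1$ bounds this by a polynomial in $M_k^f$ times $\tmu(f,x)u$.

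For the second summand I would apply \cref{lem:geodesicas} to $g$ on a minimizing geodesic from $h(x)$ to $\tilde y$, along which the condition number is uniformly bounded by $a_g(M_k^g)\tmu(g,h(x))$ by amenability (A.2). This yields $\dist(g(\tilde y), g(h(x))) \leq a_g(M_k^g)\tmu(g,h(x))\cdot a_h(M_k^h)\tmu(h,x)u$, which compatibility again turns into a polynomial-in-$M_k^f$ multiple of $\tmu(f,x)u$. Combining the two summands and recalling that $M_k^g, M_k^h \leq b(M_k^f)$ yields the forward stability estimate $\dist(\hat g^u(\tilde y), g(h(x))) \leq a(M_k^f)\tmu(f,x)u$ with $a$ a polynomial assembled from $a_g$, $a_h$, $b$, and $c$.

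The most delicate step is the third paragraph: \cref{lem:geodesicas} requires a minimizing geodesic along which $\tmu(g,\cdot)$ is uniformly bounded, but amenability (A.2) delivers such a bound only on the metric ball $B_{h(x)}$, so one must verify that a minimizing geodesic joining $h(x)$ to $\tilde y$ actually stays inside $B_{h(x)}$. This relies on the specific geometry from \cref{def:finsler}: on each connected component of $\R^{n_k^h}$ the coordinatewise relative error metric is isometric to Euclidean space via a coordinatewise logarithm, so its metric balls are geodesically convex and any minimizing geodesic between two points of $B_{h(x)}$ remains inside. The same logarithmic picture ensures that $\tilde y$ and $h(x)$ lie in the same connected component, which is implicit in the first step since $\dist(\tilde y, h(x))$ is finite.
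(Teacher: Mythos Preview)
Your argument is correct and mirrors the paper's proof: triangle inequality, forward stability of $\hat h^u$, amenability of $g$ to place $\tilde y$ in the domain and bound $\tmu(g,\tilde y)$, \cref{lem:geodesicas} for the second summand, forward stability of $\hat g^u$ for the first, and compatibility \cref{eq:reves} throughout to convert everything into multiples of $\tmu(f,x)u$. Two small remarks: the paper keeps the amenability polynomials $a_g,a_h$ separate from the stability polynomials $\hat a_g,\hat a_h$, which you have silently merged; and the step you call ``most delicate'' is in fact automatic, since the minimizing geodesic starts at the \emph{center} $h(x)$ of $B_{h(x)}$, so every intermediate point $\gamma(t)$ satisfies $\dist(h(x),\gamma(t))\leq\dist(h(x),\tilde y)$ and hence lies in $B_{h(x)}$---no appeal to geodesic convexity via the logarithmic chart is needed.
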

\begin{proof}
Let $h : \cupdot_k S_k \to \cupdot_k T_k$ and $g : \cupdot_k T_k \to \cupdot_k \R^{p_k}$ with $S_k \subset \R^{m_k}, T_k\subseteq \R^{n_k}$. Denote $M_k^h=\max\{m_k,n_k\}$, $M_k^g=\max\{n_k,p_k\}$ and $M_k^f=\max\{m_k,p_k\}$.
Let $a_g(t)$ and $a_h(t)$ be the amenability polynomials from \cref{def_amenable} for $g$ and $h$, respectively. Let $\hat{a}_g(t)$ and $\hat{a}_h(t)$ be the stability polynomials from \cref{def_fs} of $\hat g$ and $\hat h$, respectively. Finally, let $b(t),c(t)$ be the compatibility polynomials from \cref{def_compatible} for $g$ and $h$, which by compatibility implies $M_k^h,M_k^g\leq b(M_k^f)$.
 We will show that the theorem holds with stability polynomial
 \[
 a(t) = c(t) a_g(b(t)) \bigl( \hat{a}_g(b(t)) + \hat{a}_h(b(t)) \bigr).
 \]
	
	When $\tmu(g\circ h,x)=\infty$, there is nothing to check. Thus, in the remainder of the proof we can assume by compatibility that both $\tmu(g,h(x))$ and $\tmu(h,x)$ are finite.
	
	Since $g$ and $h$ are compatible and $\hat{h}^u$ is forward stable, we have
	\begin{multline}
	u\leq\frac{1}{a(M_k^f) \tmu(f,x)}
	\stackrel{\eqref{eq:reves}}
	\leq \frac{c(M_k^f)}{a(M_k^f)\tmu(h,x)}
	\leq\frac{1}{\hat{a}_h(M_k^h) \tmu(h,x)} \Rightarrow \\
	\dist(\hat{h}^u(x),h(x))\leq \hat{a}_h(M_k^h) \tmu(h,x)u
	\stackrel{\cref{eq:reves}}{\leq} \frac{c(M_k^f) \hat{a}_h(M_k^h)}{a(M_k^f)\tmu(g,h(x))}
\leq \frac{1}{a_g(M_k^g) \tmu(g,h(x))}
	\label{eq:1},
	\end{multline}
	where in the second use of \cref{eq:reves} the consequent $u$ was first bounded by $\frac{1}{a(M_k^f) \tmu(f,x)}$.
 We know that the ball $B_{h(x)}$ with radius $\frac{1}{a_g(M_k^g) \tmu(g, h(x))}$ centered at $h(x)$ is contained in $T_k$.
	Consequently, $g(\hat{h}^u(x))$ is well defined and a minimizing geodesic $\gamma_h:[0,1]\to T_k$ from $h(x)$ to $\hat{h}^u(x)$ exists.
	Moreover, from (A.2) for $g$ it follows that
	\begin{align}\label{eqn_another}
	\tmu(g, \gamma_h(t)) \le a_g(M_k^g) \tmu(g, h(x)),\quad t \in [0,1].
	\end{align}
From \cref{lem:geodesicas}, we have
	\begin{align}
	\dist(g(\hat{h}^u(x)), g(h(x)))
	\nonumber&\leq \dist(\hat{h}^u(x),h(x)) \cdot \max_{t\in[0,1]} \tmu(g, \gamma_h(t)) \\
	\nonumber&\leq a_g(M_k^g) \tmu(g,h(x)) \dist(\hat{h}^u(x),h(x)) \\
	\nonumber&\stackrel{\eqref{eq:1}}{\leq} a_g(M_k^g) \hat{a}_h(M_k^h) \tmu(g,h(x))\tmu(h,x)u \\
	\label{eq:2} &\stackrel{\eqref{eq:reves}}{\leq} a_g(M_k^g) \hat{a}_h(M_k^h)c(M_k^f) \tmu(f,x) u.
	\end{align}
	
	From \cref{eqn_another} above, $\tmu(g,\hat{h}^u(x))\leq a_g(M_k^g) \tmu(g,h(x))$. This implies
	\[
	u
	\leq \frac{1}{a(M_k^f) \tmu(f,x)}
	\leq \frac{c(M_k^f)}{a(M_k^f)\tmu(g,h(x))}
	\leq \frac{a_g(M_k^g) c(M_k^f)}{a(M_k^f)\tmu(g,\hat{h}^u(x))}
	\leq  \frac{1}{\hat{a}_g(M_k^g) \tmu(g,\hat{h}^u(x))}.
	\]
	From the forward stability of $\hat{g}^u$, it follows that $\hat{g}^u(\hat{h}^u(x))$ is well defined and
	\begin{equation}
	\dist(\hat{g}^u(\hat{h}^u(x)),g(\hat{h}^u(x)))
	\leq \hat{a}_g(M_k^g) \tmu(g,\hat{h}^u(x)) u
	\leq a_g(M_k^g) \hat{a}_g(M_k^g) \tmu(g,h(x)) u.\label{eq:3}
	\end{equation}
	
	Putting the two bounds \cref{eq:2,eq:3} together, we have proved that
	\begin{align*}
	\dist(\hat{f}^u(x),f(x))
	&\leq \dist(\hat{g}^u(\hat{h}^u(x)),g(\hat{h}^u(x)))+\dist(g(\hat{h}^u(x)),g(h(x)))\\
	&\leq a_g(M_k^g)\hat{a}_g(M_k^g)\tmu(g,h(x))u+ a_g(M_k^g) \hat{a}_h(M_k^h)c(M_k^f)\tmu(f,x)u\\
	&\leq (a_g(M_k^g) \hat{a}_g(M_k^g) c(M_k^f) + a_g(M_k^g) \hat{a}_h(M_k^h) c(M_k^f))\tmu(f,x)u.
	\end{align*}
	The theorem follows using again that $M_k^g,M_k^h \leq b(M_k^f)$.
\end{proof}

\begin{remark}
One can extend \cref{th:composition} to the composition of three or more maps by applying it repeatedly. For example, if the composition $f\circ g\circ h$ makes sense and the three functions are amenable, one must check that $g$ and $h$ are compatible, and that $f$ and $g\circ h$ are compatible. Then, given stable algorithms $\hat f,\hat g, \hat h$, from \cref{th:composition} we have a stable algorithm $\hat g\circ \hat h$ for $g\circ h$ and again from \cref{th:composition} the algorithm $\hat f\circ \hat g\circ \hat h$ is stable for $f\circ g\circ h$.
\end{remark}

\subsection{A chain of implications}

The previous subsection introduced a potent tool for checking the forward stability of numerical algorithms for amenable problems: if the algorithm can be decomposed as a composition of two forward stable numerical algorithms whose functions are amenable and compatible, then the original algorithm is forward stable.
Now we derive a chain of implications between the notions of stability introduced in \cref{sec_numerical_stability}. This yields another powerful tool for proving forward stability of numerical algorithms for amenable problems. In particular, it will be shown that both backward and mixed stable algorithms are also forward stable under the hypothesis of amenability. Since the presented notions of backward and mixed stability are essentially the classic ones, we postulate that most known backward and mixed stable algorithms in the literature for amenable problems are forward stable in the formal sense of \cref{def_fs}.

We start by observing that backward stability implies mixed stability.

\begin{proposition} \label{thm_backward_implies_mixed}
	Let $f$ be scalable. If $\hat{f}^u$ is a backward stable numerical algorithm implementing $f$, then it is mixed stable.
\end{proposition}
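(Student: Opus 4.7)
The plan is immediate from unpacking the two definitions side by side. Backward stability of $\hat{f}^u$ supplies polynomials $a$ and $b$ and, for every $x \in S_k$ with $0 < u \leq 1/a(M_k)$, a point $y$ such that
\[
\hat{f}^u(x) = f(y) \quad \text{and} \quad \dist(x,y) \leq b(M_k) u.
\]
Mixed stability asks for polynomials $a'$, $b'$, $c'$ and a point $y$ (again, for every valid $x$ and $u \leq 1/a'(M_k)$) satisfying
\[
\dist(\hat{f}^u(x), f(y)) \leq b'(M_k) u \quad \text{and} \quad \dist(x,y) \leq c'(M_k) u.
\]

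First, I would take $a' := a$ and $c' := b$, and pick the same witness $y$ produced by backward stability. This already takes care of the second inequality of mixed stability. For the first inequality, observe that $\hat{f}^u(x) = f(y)$ exactly, so $\dist(\hat{f}^u(x), f(y)) = 0$, which is bounded by $b'(M_k) u$ for \emph{any} polynomial $b'$ (since $u > 0$). Following the convention adopted in the remark preceding \cref{def_backward_stability} that all polynomials are assumed to be at least $4$, I would simply set $b'$ to be the constant polynomial equal to $4$ (or reuse $b$).

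I would also note, for completeness, that the requirement ``an output must be produced for all such choices of $x$ and $u$'' is preserved: the mixed-stability hypothesis on production of output inherits directly from the backward-stability one for the same range $0 < u \leq 1/a(M_k)$. There is essentially no obstacle: the result is a book-keeping exercise on the definitions, confirming that the classic implication backward $\Rightarrow$ mixed survives the formalization in \cref{sec_numerical_stability}. No amenability hypothesis is needed here; amenability will only enter when passing from mixed to forward stability in the subsequent result.
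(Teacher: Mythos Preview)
Your proposal is correct and matches the paper's treatment: the paper states this proposition without proof, regarding it as an immediate observation, and your argument is precisely the natural unpacking of the definitions that justifies it.
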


The reverse implication does not hold. For example, consider the function that maps a matrix to the orthogonal matrix in the $QR$ factorization. A standard implementation of this method can be proved to be mixed stable; however, due to the fact that the output cannot be guaranteed to be orthogonal for finite $u$ as irrational numbers are not representable in $\F_u$, it is not backward stable.

When the underlying function $f$ is amenable, mixed implies forward stability.

\begin{theorem} \label{thm_mixed_implies_fp}
	Let $f$ be amenable. If $\hat{f}^u$ is a mixed stable numerical algorithm implementing $f$, then it is forward stable.
\end{theorem}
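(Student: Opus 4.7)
The plan is to use the triangle inequality to decompose the forward error into the mixed-stability error plus a Lipschitz-type term that will be controlled by amenability. Concretely, assume $\tmu(f,x)$ is finite (otherwise the conclusion is vacuous). Let $a,b,c$ be the polynomials provided by \cref{def_mixed_stability} for $\hat{f}$, and let $a_f$ be the amenability polynomial of $f$ provided by \cref{def_amenable}. For any $u$ small enough to invoke mixed stability, pick the witness $y$ with $\dist(\hat{f}^u(x),f(y))\leq b(M_k)u$ and $\dist(x,y)\leq c(M_k)u$. Then
\[
\dist(\hat{f}^u(x),f(x)) \;\leq\; \dist(\hat{f}^u(x),f(y)) + \dist(f(y),f(x)) \;\leq\; b(M_k)u + \dist(f(y),f(x)).
\]

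The crucial step is bounding $\dist(f(y),f(x))$ by a multiple of $\tmu(f,x)\,\dist(x,y)$. To do so, I want to force $y$ to lie in the amenability ball $B_x$ of radius $1/(a_f(M_k)\tmu(f,x))$, and then apply \cref{lem:geodesicas}. This requires $c(M_k)u \leq 1/(a_f(M_k)\tmu(f,x))$, i.e., $u \leq 1/(c(M_k)a_f(M_k)\tmu(f,x))$. Provided $y\in B_x$, any minimizing geodesic $\gamma$ from $x$ to $y$ consists of points at distance at most $\dist(x,y)$ from $x$, so $\gamma\subset B_x$; by condition (A.2) we have $\tmu(f,z)\leq a_f(M_k)\tmu(f,x)$ for all $z\in\gamma$, and hence $\kappa(f,z)\leq a_f(M_k)\tmu(f,x)$ along $\gamma$. \cref{lem:geodesicas} then yields
\[
\dist(f(y),f(x)) \;\leq\; a_f(M_k)\tmu(f,x)\,\dist(x,y) \;\leq\; a_f(M_k)c(M_k)\,\tmu(f,x)\,u.
\]

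Combining the two bounds and using $\tmu(f,x)\geq 1$ to absorb the stray $b(M_k)u$, we obtain
\[
\dist(\hat{f}^u(x),f(x)) \;\leq\; \bigl(b(M_k) + a_f(M_k)c(M_k)\bigr)\,\tmu(f,x)\,u.
\]
Define the candidate stability polynomial
\[
\tilde{a}(t) \;=\; a(t) + b(t) + c(t)\,a_f(t),
\]
which is a polynomial and dominates both $a(t)$ and $c(t)a_f(t)$ and $b(t)+c(t)a_f(t)$. Then $u\leq 1/(\tilde{a}(M_k)\tmu(f,x))$ forces both $u\leq 1/a(M_k)$ (so mixed stability applies, since $\tmu(f,x)\geq 1$) and $u\leq 1/(c(M_k)a_f(M_k)\tmu(f,x))$ (so $y\in B_x$), and delivers $\dist(\hat{f}^u(x),f(x))\leq \tilde{a}(M_k)\tmu(f,x)u$. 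This verifies \cref{def_fs}.

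The only nontrivial point in the argument is the geometric claim that the minimizing geodesic from $x$ to $y$ remains inside $B_x$; this is essentially automatic from the definition of $B_x$ as a metric ball and the defining property of a minimizing geodesic, but it is worth stating explicitly because it is what allows amenability's pointwise condition number control (A.2) to be promoted to a global Lipschitz bound via \cref{lem:geodesicas}. Aside from this, the proof is a bookkeeping exercise that combines the triangle inequality with the hypotheses.
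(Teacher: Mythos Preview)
Your proof is correct and follows essentially the same route as the paper's: triangle inequality, then amenability to place $y$ in $B_x$, then \cref{lem:geodesicas} together with (A.2) to bound $\dist(f(y),f(x))$, with the same stability polynomial up to relabeling. Your explicit remark that the minimizing geodesic stays inside the metric ball $B_x$ is a welcome clarification of a step the paper leaves implicit.
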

\begin{proof}
    Let $a(t)$, $b(t)$ and $c(t)$ be the polynomials from \cref{def_mixed_stability} for $\hat{f}^u$. Let $d(t)$ be the amenability polynomial in \cref{def_amenable} for $f$. We now show that $\hat{f}^u$ is forward stable with stability polynomial $p(t) = d(t) c(t) + a(t) + b(t)$.

    We can assume $\tmu(f,x) < \infty$, for otherwise forward stability is vacuously satisfied. Let $u \leq \frac{1}{p(M_k) \tilde{\kappa}(f,x)}$. By the triangle inequality, we have
	\[
	\dist(\hat{f}^u(x),f(x)) \leq \dist(\hat{f}^u(x),f(y)) + \dist(f(y),f(x)),
	\]
	where $y$ is as in \cref{def_mixed_stability}.
	The first term is bounded by $b(M_k) u$ by definition of mixed stability. To estimate the second addend, we rely on \cref{lem:geodesicas} to get
	\[
	\dist(f(y),f(x)) \leq \dist(x,y) \cdot \max_{t\in[0,1]} \kappa(f, \gamma(t)),
	\]
	where $\gamma(t)$ is the minimizing geodesic from $x$ to $y$. It exists from (A.1) for $f$, since
	\[
	 \dist(x,y) \le c(M_k) u \le \frac{c(M_k)}{p(M_k) \tmu(f,x)} \le \frac{1}{d(M_k) \tmu(f,x)}.
	\]
    By (A.2) for $f$, we obtain
    \[
     \dist(f(y),f(x)) \leq \dist(x,y) \cdot \max_{t\in[0,1]} \kappa(f, \gamma(t)) \le  d(M_k) c(M_k) \tmu(f,x) u.
    \]
    By putting everything together we obtain
    \[
     \dist(\hat{f}^u(x),f(x)) \leq b(M_k) u + d(M_k) c(M_k) \tmu(f,x) u \le p(M_k) \tmu(f,x) u,
    \]
    which concludes the proof.
\end{proof}

\subsection{The stacking theorem} \label{sec_stacking_n_functions}
The previous subsection showed how existing results combined with amenability can be leveraged to prove stability. Now we show our tool to extend claims on univariate functions to vector, matrix and tensor--valued mappings.

\begin{theorem}\label{prop:stacking}
Let  $f : S_k\subseteq\R^{m_k} \to \R^{n_k}$ be a scalable function. Recall that $M_k=\max(m_k,n_k)$, and assume that there exists a polynomial $a(t)$ such that:
\begin{enumerate}
\item Each coordinate function $f_{k,i}:S_k\to \R$, $1\leq i\leq n_k$, is, viewed as a non-scalable function, amenable with constant amenability polynomial $a(M_k)$.
\item There exist corresponding forward stable numerical algorithms $\hat{f}_{k,i}^u$ for $f_{k,i}$ with constant stability polynomial $a(M_k)$.
\end{enumerate}
Then, $f$ is amenable as a scalable function with amenability polynomial $A(t)=ta(t)$ and the stacked algorithm $\hat{f}^u$ given by $\hat {f}^u_k(x)= (\hat{f}_{k,1}^u(x), \ldots, \hat{f}_{k,n_k}^u(x))$ is a forward stable algorithm for $f$ with stability polynomial $A(t)$.

\end{theorem}
\begin{proof}
The condition number of $f_k=f|_{S_k}$ and its component functions $f_{k,i}$ are related as follows:
\begin{align} \label{eqn_important_stacking_bound}
\max_{1\leq i\leq n }\kappa(f_{k,i},\vect{x})
\leq\kappa(f_k, \vect{x})
\leq \sqrt{\sum_{i=1}^{n_k} \kappa(f_{k,i},\vect{x})^2}
\le \sqrt{n_k} \max_{1\leq i\leq n }\kappa(f_{k,i},\vect{x}).
\end{align}
That is, the condition number of the stacked problem for any fixed $k$ is proportional to the condition number of the worst function in the stacking for that fixed $k$.
This is indeed easy to check from the definition.

We check directly that $f$ is amenable from \cref{def_amenable}. Fix any $k\geq1$ and $\vect{x}\in S_k$. Since all $f_{k,i}$ are amenable with constant polynomial $a(M_k)$, we have
\[
\dist(\vect{x},\vect{y}) \leq \frac{1}{a(M_k) \tmu(f_{k,i},\vect{x})} \Rightarrow \begin{cases}\vect{y}\in S_k\\\tmu(f_{k,i},\vect{y})\leq a(M_k) \tmu(f_{k,i},\vect{x}).\end{cases}
\]
Then, for all $i=1,\ldots,n$, with $A(t) = ta(t)$ we have
\begin{multline*}
\dist(\vect{x},\vect{y}) \leq \frac{1}{A(M_k) \tmu(f_k,\vect{x})} \Rightarrow \dist(\vect{x},\vect{y}) \leq \frac{1}{a(M_k) \tmu(f_{k,i},\vect{x})}\Rightarrow\\ \tmu(f_{k,i},\vect{y})\leq a(M_k) \tmu(f_{k,i},\vect{x}),
\end{multline*}
and hence the last item in \cref{def_amenable} is easily checked:
\begin{multline*}
\tmu(f_k,\vect{y})\leq \sqrt{\sum_{i=1}^{n_k} \tmu(f_{k,i},\vect{y})^2}
\leq a(M_k) \sqrt{\sum_{i=1}^{n_k}\tmu(f_{k,i},\vect{x})^2}
\leq \\\sqrt{n_k} a(M_k) \tmu(f,\vect{x})\leq A(M_k)\tmu(f,\vect{x}).
\end{multline*}
We have thus proved that each $f_k$ is amenable with constant $A(M_k)$, so $f$ is amenable with polynomial $A(t)$.

Assume now that the functions $f_{k,i} : S_k \to \R$ have forward stable algorithms $\hat{f}_{k,i}^u$. Fix any $k\geq1$. By \cref{def_fs}, and reusing the amenability polynomials $a$ and $A$, we have
\[
\forall \vect{x} \in S_k, \forall u < \frac{1}{a(M_k) {\tmu}(f_{k,i}, \vect{x})}: \dist(\hat{f}_{k,i}^u(\vect{x}), f_{k,i}(\vect{x})) \le a(M_k) {\tmu}(f_{k,i}, \vect{x}) \, u.
\]
Hence,
\begin{multline*}
 \dist(\hat{f}_k^u(\vect{x}), f_k(\vect{x})) = \sqrt{\sum_{i=1}^{n_k} \dist(\hat{f}_{k,i}^u(\vect{x}), f_{k,i}(\vect{x}))^2} \le\\  a(M_k)  \, u \sqrt{\sum_{i=1}^{n_k} \kappa(f_{k,i},\vect{x})^2}\le A(M_k) {\tmu}(f_k,\vect{x}) \, u,
\end{multline*}
proving that $\hat{f}^u_k$ is forward stable for $f_k$ with stability constant polynomial $A(M_k)$, that is, $\hat f^u$ is forward stable for $f$ with stability polynomial $A(t)$.
\end{proof}

Notable cases include replicating an input $n$ times and the identity map on $\R^n$.

\section{Amenable problems and stable algorithms to solve them}
\label{sec:amenandstable}

We now describe a catalogue of elementary but fundamental problems that are amenable when we endow our input and output spaces with the coordinatewise relative distance. We also provide forward stable algorithms for solving them.

The next graph summarizes the results we establish in the next subsections:\\[-5pt]
\begin{center}\begin{tikzpicture}[every node/.style={text width=3cm, text centered, font=\footnotesize}]
\node[draw] (A) {6.1. Elementary\\ functions};
\node[draw,below=.2cm of A] (B) {6.2. Multiplication};
\node[draw,below=.2cm of B] (C) {6.2. Summation};
\node[draw,right=.6cm of B] (D) {6.3. Arithmetic on\\ tensors};
\node[draw,right=4.5cm of A] (E) {6.5. Euclidean inner product};
\node[draw,right=4.5cm of C] (F) {6.4. Linear maps};
\node[draw,right=4.5cm of B] (G) {6.5. Euclidean norm};
\draw[->,>=stealth,thick] (A) -- ++(1.75cm,0) |- (D);
\draw[thick] (B) -- (D);
\draw[thick] (C) -- ++(1.75cm,0) |- (D);
\draw[->,>=stealth,thick] (D) -- (G);
\draw[->,>=stealth,thick] (D) -- ++(1.75cm,0) |- (E);
\draw[->,>=stealth,thick] (D) -- ++(1.75cm,0) |- (F);
\end{tikzpicture}\end{center}
This section can be skipped on a first reading, as its main goal is to establish amenability of the above problems, and prove the stability of basic algorithms for solving them.

\subsection{Elementary functions} \label{sec_fundamental_constructions}
It is verified immediately that the identity map $\mathrm{Id} : \R \to \R,\; x \mapsto x$, the constant map $\alpha : \R \to \R,\; x \mapsto \alpha$, and the inversion map $\cdot^{-1}: \R_0 \to \R_0,\; x \mapsto x^{-1}$ have constant condition numbers bounded by $1$.
Moreover, since they are defined everywhere (except possibly at $0$), the conditions of \cref{def_amenable} are satisfied, so they are amenable.
The algorithms implementing these functions as stated are forward stable because in all cases
\(
 \dist(\hat{f}^u(x), f(x)) = \log \left| \frac{f(x)(1+\delta)}{f(x)} \right| \le 2u
\)
where $|\delta| \le u < 1/4$.

Let $\alpha \in \R$ be a constant. It can be verified that $x \mapsto x \circ \alpha$, where $\circ \in \{+, -, \cdot\}$, has a forward stable algorithm that consists of replacing $\circ$ by the floating-point implementation $\hat{\circ}^u$ (use \cref{lem:Gronwall} for amenability). For division $x \mapsto x/\alpha$ the same holds if $\alpha$ is nonzero and likewise for $x \mapsto \alpha/x$ with $x$ nonzero.

\subsection{Multiplication and summation}\label{sec:muliandsum}
The scalable function codifying the product,
\(
\Pi : \mathbf{x}=(x_1, \ldots, x_k) \mapsto x_1 \cdots x_k
\)
on the domain $D = \cupdot_k \R^k$, has condition number
\[
\kappa(\Pi,\vect{x}) = \left\| \begin{bmatrix} \frac{x_1 \tfrac{\partial}{\partial x_1} (x_1 \cdots x_k)}{x_1 \cdots x_k} & \cdots & \frac{x_k \tfrac{\partial}{\partial x_k} (x_1 \cdots x_k)}{x_1 \cdots x_k} \end{bmatrix} \right\|_2
= \sqrt{k},
\]
if all the $x_i$ are different from $0$, and otherwise
\(
\kappa(\Pi,\vect{x}) = 0
\)
because $\Pi$ is locally constant. As this is constant in the $x_i$'s in both cases, we obtain $\kappa(\kappa_\Pi,\vect{x}) = 0$, so $\Pi$ satisfies the hypotheses of \cref{lem:Gronwall}. Hence, the function is amenable in all of the connected components of $\R^k$ (with the topology of relative error), which from \cref{lem:domainaditivity} implies that $\Pi$ is amenable in $\R^k$ and also in $D$.

It is easy to see from the definition that the trivial algorithm for $\Pi$ that generates the sequence $x_1,x_1x_2,\ldots,\Pi(\vect{x})=x_1\cdots x_k$ is backward stable: its output is
\(\Pi(\vect{x})\prod_{i=1}^{2k-1}(1+\delta_i)\) where $\delta_i\in(-u,u)$, that is the exact product of $y_1,x_2,\ldots,x_k$ where the relative distance from $y_1$ to $x_1$ is at most $|\log(1-u)^k|\leq 2ku$ whenever $u<1/(4k)$. Hence, from \cref{def_backward_stability} with stability polynomials $a(k)=b(k)=4k$, the algorithm is backward stable and from \cref{thm_mixed_implies_fp} we deduce that it is forward stable.

The same strategy shows that, for any fixed $k\in\Z$, the map $x\to x^k$ is amenable (once we remove the obvious exceptions such as $0^{-1}$) and the straightforward algorithm is forward stable.

A similar argument works with summation, which also defines a scalable function that we denote by $\Sigma : \cupdot_k \R^k \to \R,\; (x_1, \ldots, x_k) \mapsto x_1 + \cdots + x_k$. The condition number can be computed from \cref{prop_cond_expression}(i):
\[
\kappa(\Sigma,\vect{x}) = \left| \begin{bmatrix} \frac{x_1 \tfrac{\partial}{\partial x_1}(x_1 + \cdots + x_k)}{x_1 + \cdots + x_k} & \cdots & \frac{x_k \tfrac{\partial}{\partial x_k}(x_1 + \cdots + x_k)}{x_1 + \cdots + x_k} \end{bmatrix} \right\|_2 = \frac{\sqrt{\sum_{i=1}^k x_i^2}}{|x_1 + \cdots + x_k|}=\frac{\|x\|_2}{|\Sigma(x)|},
\]
if $\Sigma(\vect{x})\ne0$. When $\Sigma(\vect{x})=0$ and $\vect{x}\ne0$, \cref{prop_cond_expression}(ii) entails that $\kappa(\Sigma,\vect{x})=\infty$, so the expression {above} is valid for all $\vect{x}\ne0$. It is a routine exercise to show that
\[
\kappa(\kappa_\Sigma,\vect{x})\leq 2k \kappa(\Sigma,\vect{x}),
\]
showing again that $\Sigma$ is amenable. As in the case of multiplication, it is easy to see that the naive algorithm for $\Sigma$ is backward stable which again under amenability implies forward stable from \cref{thm_mixed_implies_fp}.

\subsection{Arithmetic operations on vectors, matrices and tensors}\label{sec_tensor_arithmetic_amenable}

Since we have seen that stacking amenable functions gives another amenable function, it follows immediately that the following maps with domain $\R^{n_1 \times \cdots \times n_d}$ are amenable functions (recall \cref{rmk:dimensions} on the role of the dimensions of the input and range):
\begin{enumerate}
 \item multiplication by a constant, i.e., $X \mapsto \alpha X$ for $\alpha \in \R$.
 \item adding a constant, i.e., $[ X_{i_1,\ldots,i_d} ] \mapsto [X_{i_1,\ldots,i_d} + \alpha]$ where $\alpha \in \R$.
 \item adding a constant array, i.e., $X \mapsto X + A$ with $A \in \R^{n_1 \times \cdots \times n_d}$.
 \item adding $n$ arrays, i.e., $(X_1, \ldots, X_n) \mapsto X_1 + \cdots + X_n$, where $X_i \in \R^{n_1\times\cdots\times n_d}$.
 \item \textit{Hadamard product} with a constant array $A \in \R^{n_1 \times \cdots \times n_d}$, i.e.,  $X \mapsto X \circledast A = [X_{i_1,\ldots,i_d} A_{i_1,\ldots,i_d}]$;
 \item Hadamard product of $n$ arrays, i.e., $(X_1, \ldots, X_n) \mapsto X_1 \circledast \cdots \circledast X_n$.
\item \textit{tensor product} with a constant array $A \in \R^{m_1 \times \cdots \times m_e}$, i.e.,
\(
 X \mapsto X \otimes A = [X_{i_1,\ldots,i_d} A_{j_1,\ldots,j_e}],
\)
where the image is an element of $\R^{n_1 \times \cdots \times n_d \times m_1 \times \cdots \times m_e}$.
\item tensor product of $n$ arrays, i.e., $(X_1, \ldots, X_n) \mapsto X_1 \otimes \cdots \otimes X_n$ (this time the domain is $(\R^{n_1 \times \cdots \times n_d})^n$). As a special case, the \textit{Kronecker product} of two matrices is amenable.
\end{enumerate}

For all aforementioned functions, the straightforward algorithms implementing the formulas are forward stable by \cref{prop:stacking}.

\subsection{Linear maps} \label{sec_linear_map_amenable}
We consider the sequence of linear maps $A_k : \R^{k} \to \R^{n_k}$ given in coordinates by a sequence of $n_k \times k$ matrices $A_k$.
Consider first the case $n_k = 1$ for all $k$, so that the map is the scalable function
\(
 f : \bigcupdot_k \R^k \to \R : \vect{x} \mapsto A_k \vect{x} = \sum_{i=1}^k a_i x_i
\).\footnote{To be precise, we should rather write $a_{k,i}$ since the coefficients of the matrix depend on $k$ but we think there is no ambiguity and use the less cumbersome notation.}
As with summation, we note that if one $x_i=0$, then $f_k$ is equivalent to $f_{k-1}$ where the argument $x_i$ is dropped. As $\kappa(f,0)=0$ by definition, it suffices to treat the case where all elements of $\vect{x}$ are nonzero. In this case,
the condition number is verified to be \(\kappa(f_k, \vect{x}) = \frac{1}{|A_k \vect{x}|} \left\| \begin{bmatrix} a_1 x_1 & \cdots & a_k x_k \end{bmatrix} \right\|_2\), a formula that holds even if $A_k \vect{x}=0$. We realize this map as the composition of $g(\vect{x}) = (a_1 x_1, \ldots, a_k x_k)$ and the addition map $\Sigma_k$. Note that $g$ is the Hadamard product of $\vect{x}$ with the vector of constants $\vect{a}=(a_1, \ldots, a_k)$, so it is amenable and the straightforward algorithm is stable. We have
\[
 \kappa(g,\vect{x}) = \sqrt{\chi(\vect{x})} \le \sqrt{k}
 \quad\text{and}\quad
 \kappa(\Sigma_k, \vect{x}\circledast\vect{a} ) = \frac{1}{|A_k \vect{x}|}  \left\| \vect{x}\circledast\vect{a} \right\|_2,
\]
hence showing that
\(
 \kappa(f, \vect{x}) = \kappa(\Sigma_k, \vect{x}\circledast\vect{a} ).
\)
Thus, \cref{th:composition} holds for compositions with the stable summation algorithm $\hat{\Sigma}^u$ from \cref{sec:muliandsum} with $a(t) = t+4$. This concludes the argument for $n=1$. For general $n > 1$, $f$ is amenable by stacking the $n$ amenable component functions. The corresponding algorithm is forward stable.

Note that arbitrary linear combinations of a fixed set of vectors can be computed stably using the above algorithm.

\subsection{Inner product and Euclidean norm}
The final elementary example from linear algebra we consider is computing the inner product
\[
 \langle \cdot, \cdot \rangle : \bigcupdot_k \underbrace{\R^k \times \R^k}_{S_k} \to \R, \quad (\vect{x}, \vect{y}) \mapsto \vect{x}^T \vect{y},
\]
and its induced Euclidean norm $\|\vect{x}\|_2 = \sqrt{\langle\vect{x},\vect{x}\rangle}$.
This inner product can be realized by composing $\Sigma_k$ with the Hadamard product $\circledast_k$, both of which are amenable and have stable algorithms.
Again, if either $x_i=0$ or $y_i=0$, then $\langle\cdot,\cdot\rangle_k$ is equivalent to $\langle\cdot,\cdot\rangle_{k-1}$ where both arguments $x_i$ and $y_i$ are dropped. It thus suffices to treat the case where all elements of $\vect{x}$ and $\vect{y}$ are nonzero.
As we have
\(
 \kappa(\circledast_k, (\vect{x},\vect{y})) = \sqrt{2k}
\)
and, by \cref{prop_cond_expression}(i),
\[
 \kappa(\langle \cdot, \cdot \rangle, (\vect{x},\vect{y}))
 = \frac{1}{|\vect{x}^T \vect{y}|} \| (x_1 y_1, \ldots, x_k y_k) \|_2
 = \frac{\| \vect{x} \circledast \vect{y} \|_2}{| \Sigma( \vect{x} \circledast \vect{y}) |} = \kappa(\Sigma, \vect{x} \circledast \vect{y}),
\]
(when $\vect{x}^T \vect{y}=0$, the same formula holds),
compatibility of $\langle \cdot, \cdot \rangle = \Sigma \circ \circledast$ follows. Hence by the main theorem, the composition of stable algorithms for summation and Hadamard product is a stable algorithm for the standard inner product.

The induced norm can be computed via the three-part composition
\[
 \|\vect{x}\|_2 = \sqrt \circ \left(\langle \cdot, \cdot \rangle \circ (\vect{x}\mapsto(\vect{x},\vect{x})) \right) = \sqrt \circ (\vect{x} \mapsto \| \vect{x} \|_2^2).
\]
As before, taking the norm in which some elements are zero is the same as taking the norm of the nonzero part of the vector. Hence, we can assume that all $x_i\ne0$.
First we show that $\langle \cdot, \cdot \rangle \circ (\vect{x}\mapsto(\vect{x},\vect{x}))$ is a stable algorithm for $\vect{x} \mapsto \|\vect{x}\|^2_2$. We already know the maps are amenable, so by \cref{th:composition} checking compatibility suffices. They are also compatible because
\(
 \kappa(\|\cdot\|^2_2, \vect{x}) = \frac{1}{\|\vect{x}\|_2^2} 2 \| \vect{x} \|_2^2 = 2
\)
and the copy map $c_k : \R^k \to \R^{2k},\; \vect{x} \mapsto (\vect{x},\vect{x})$ has $\kappa(c, \vect{x}) = \|[ I_{k} \; I_{k} ]\|_2 = \sqrt{2}$. The difficult one is
\[
 \kappa(\langle \cdot, \cdot \rangle, (\vect{x},\vect{x})) = \frac{\| \vect{x} \circledast \vect{x} \|_2}{ \| \vect{x} \|_2^2 } \ge \frac{1}{\sqrt{k}} \frac{\| \vect{x} \circledast \vect{x} \|_1}{ \| \vect{x} \|_2^2 } = \frac{1}{\sqrt{k}}.
\]
Compatibility follows, so this is a stable algorithm for computing the squared norm.

Amenability of $\sqrt{\cdot}$ on $\R_{\ge0}$ follows immediately from the fact that
\(
 \kappa(\sqrt{\cdot}, x)
 = \frac{1}{2}
\)
is constant and the distance to zero is $\infty$. It can be deduced from \cite[p.~113]{Ypma1983} and classic roundoff error analysis that the square root of $4^{k-1} \le g \le 4^{k}$ can be computed mixed stably by the Babylonian method; that is, Newton's method applied to $F(x) = x^2 - g 4^{-k} = 0$ starting from $\frac{1}{2}$, terminated as discussed on \cite[p.~117]{Ypma1983}, and multiplying the result with $2^k$. Forward stability follows by amenability and \cref{thm_mixed_implies_fp}. Hence, for applying \cref{th:composition}, we should check compatibility of $\sqrt{\cdot}$ and $\|\cdot\|^2_2$. For their composition, $\|\cdot\|$, we find, if $\vect{x}\ne0$,
\[
 \kappa(\|\cdot\|, \vect{x}) = \frac{1}{\|\vect{x}\|^2_2} \| (2 x_1^2, \ldots, 2 x_k^2)  \|_2 = 2\frac{\| \vect{x} \circledast \vect{x}\|_2}{\|\vect{x}\|^2_2} \le 2\frac{\| \vect{x} \circledast \vect{x}\|_1}{\|\vect{x}\|^2_2} = 2.
\]
Consequently, compatibility holds, proving that a composition of stable algorithms for $c$, $\langle\cdot,\cdot\rangle$, and $\sqrt{\cdot}$ always yields a forward stable numerical algorithm for computing the $2$-norm of a nonzero vector. If the input is $\vect{x}=0$, the algorithm must output $0$, which is clearly forward stable.

\section{A sinful function} \label{sec_sinful}
The final example we discuss is that of the innocuously-looking sine function in the relative error metric. Let us consider first the finite domain $\Omega = [k_1 \pi,k_2 \pi]$ where $k_1, k_2 \in \N$ and verify amenability. The condition number is
\[
 \kappa(\sin, x) = \left| x \frac{\cos{x}}{\sin{x}} \right|
\]
when $x \ne 0$ and $0$ by definition otherwise.
We will apply \cref{lem:Gronwall}, to prove the amenability of $\sin$. Since
$\lim_{x\to k\pi} \kappa(\sin,x) = \infty$ for all $k\in\N\setminus\{0\}$, the first condition of \cref{lem:Gronwall} holds. Recall that the distance to $0$ is $\infty$ in the relative error metric, so whatever happens to $\kappa(\sin,x)$ near zero is irrelevant for amenability. For the second condition, in the variant \cref{eq:alternative2}, we compute that
\[
 {\left| x \frac{\partial \kappa_{\sin}}{\partial x} \right| = \left| x\frac{\cos{x}}{\sin{x}} - \frac{x^2}{\sin^2 x} \right| \le \left| x\frac{\cos{x}}{\sin{x}} \right| + \frac{x^2}{\sin^2 x}.}
\]
When $x$ is an integer multiple of $\pi$, {both the foregoing and $\tmu(\sin,x)^2$ equal $\infty$.} As the singularities of $|x\partial \kappa_{\sin} / \partial x|$ and $\tmu(\sin,x)^2$ occur only at the roots of $\sin{x}$, we can multiply both by $\sin^2 x$ and subtract $|x \cos{x} \sin{x}|$ to find
\[
 \sin^2 x {\left| x \frac{\partial \kappa_{\sin}}{\partial x} \right|} - |x \cos{x} \sin{x}|
 \le x^2.
\]
On the other hand,
\[
 C \sin^2 x {\tmu(\sin,x)^2} - |x \cos{x} \sin{x}| = C \sin^2 x + (2C-1) |x \cos{x}\sin{x}| + C x^2 \cos^2{x}.
\]
As $\sin^2 x \ge \frac{1}{2}$ whenever $\cos^2 x \le \frac{1}{2}$, it follows that taking $C = 2(\max\{|k_1|, |k_2|\}\pi)^2$ ensures
\(
{|x\partial \kappa_{\sin} / \partial x|} \le C \tilde{\kappa}(\sin,x)^2,
\)
so that \cref{lem:Gronwall} holds {because of \cref{rmk:lips}}. Thus $\sin$ is amenable on any domain of the form $[k_1 \pi, k_2 \pi]$ with $k_1, k_2\in\N$.

By contrast, considering $\sin$ on $\R$ does \textit{not} result in an amenable function.
The reason is that the condition number $\kappa(\sin,x)$ is rapidly oscillating between $0$ at $\frac{\pi}{2} + k\pi$ and $\infty$ at $k\pi$. If $x$ is a point where $\kappa(f,x)=0$, then the ball $B_x$ from \cref{def_amenable} has constant radius $\frac{1}{a}$ in the relative error metric. Since the set of points where $\kappa=\infty$ includes $X=\{ k\pi \mid 0\neq k\in\Z\}$ and $\dist(k\pi, (k+1)\pi) = \log \frac{k+1}{k}$ is decreasing monotonically to zero as $k\to\infty$, we see that for sufficiently large $x$ any constant-radius ball in the relative metric will contain one or more points with infinite condition number.
Hence, (A.2) fails for $\sin$ on $\R$. This also shows that \cref{lem:domainaditivity} cannot be extended to countable unions.

Since the sine function is not amenable on $\R$, it cannot be realized as any composition of \textit{compatible}, \textit{amenable} functions $g_i$, $i=1,\ldots,\ell$. This is rather unsettling. First, failure of compatibility of amenable functions often results in an unstable algorithm.
To see why, note that the \emph{numerical excess factor}, which for the composition of two functions $g \circ h$ is
\begin{align}\label{eqn_blow_it_up}
 \frac{\tmu(g, h(x)) \tmu(h, x)}{\tmu(g \circ h, x)},
\end{align}
cannot be bounded by a constant polynomial $c$ in \cref{def_compatible}. Recall that a very large numerical excess factor was used in \cite{BBV2019,NT2016} to establish unstability. An numerical excess factor that is not bounded above by a small constant is compatible with the existence of inputs for which the algorithm is forward unstable.
Second, failure of one of the functions $g_i$ to be amenable typically signals a highly sensitive function so that the maximum condition number in a small neighborhood of a point $x$ is greater than a constant times $\kappa(g_i,x)$. This makes it difficult to stably compose them in numerical algorithms. In this light, it is not a surprise that the following Matlab code reveals the dramatic growth of relative forward errors displayed in \cref{fig_sinful}:\\[-6pt]

\begin{quote}\begin{verbatim}
lop = zeros(100,1);
PI = vpa('pi',10000);
SIN1 = sin(vpa('1',10000));
for k = 1 : 100
    trueInput = PI*vpa('2',10000)^(k) + 1;
    numAlgSin1 = sin(double(trueInput));
    lop(k) = abs(log(numAlgSin1 / SIN1)) / eps;
end
\end{verbatim}\end{quote}

We stress that it is impossible to decompose the sine function on $\R$ into a composition of compatible, amenable functions, irrespective of the existence or (in)stability of any potential algorithms realizing these functions.

\begin{figure}[tb]
 \includegraphics[width=\textwidth]{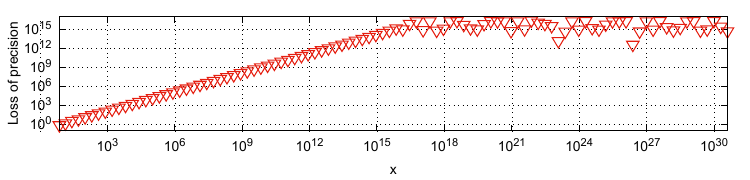}
 \caption{\footnotesize The loss of precision when computing the sine of $x = 1 + \pi 2^k$ for $k=1,\ldots,100$ in Matlab. The reference value was computed using variable precision arithmetic with $10000$ digits.}
 \label{fig_sinful}
\end{figure}

\section{Conclusions} \label{sec_conclusions}
This paper derived a composition theorem for forward stable numerical algorithms, under the sufficient conditions of amenability and compatibility of the functions that these numerical algorithms approximately compute. We employed the introduced tools to efficiently prove the stability of well-known, elementary algorithms.
Finally, we illustrated that not even every smooth function can be decomposed into compatible and amenable functions.

We hope to have convinced the reader that the composition theorem can be a valuable tool in an applied or computational mathematician's toolbox. This theorem gives clear guidance on how problems should be decomposed into subproblems, so that applying forward stable algorithms to the subproblems automatically stably solves the original problem. In our experience, a failure of the composition theorem often signals potential sources of numerical instability that merit further analysis. The composition theorem enables us to fully leverage the extensive and hard-earned catalogue of stable numerical algorithms from the last seven decades in the design of new forward stable algorithms for tomorrow's challenges.

\appendix
\section{Proofs of the technical results}

\subsection{Proof of \cref{lem:mucompupper}}\label{sec:P0}
	Let $h:S\to\R^n$, $g:T\to\R^m$ with $h(S)\subseteq T\subseteq\R^n$. First, assume that $x\in S$ is such that $\kappa(h,x)$ and $\kappa(g,h(x))$ are finite. Then,	for any given $\delta\in(0,1)$, let $\epsilon',\epsilon>0$ be such that
	\[
	\sup_{z\in T,\, 0<\dist(h(x),z)\leq \epsilon}\frac{\dist(g(h(x)),g(z))}{\dist(h(x),z)}\leq \kappa(g,h(x))+\delta, \text{ and}
	\]
	\[
	\sup_{y\in S,\, 0<\dist(x,y)\leq \epsilon'}\frac{\dist(h(x),h(y))}{\dist(x,y)}\leq \kappa(h,x)+\epsilon.
	\]
	Note that $\epsilon',\epsilon>0$ exist from the definition of $\kappa(h,x)$ and $\kappa(g,h(x))$, and note also that without loss of generality we can assume that $\epsilon'<\epsilon<\delta<1$.  Then, for $y\in S$, $0<\dist(x,y)\leq \frac{\epsilon'}{2\tmu(h,x)}$ we have
	\[
	\dist(h(x),h(y))\leq (\kappa(h,x)+\epsilon)\dist(x,y)\leq \frac{\kappa(h,x)+\epsilon}{2\tmu(h,x)} \epsilon' <\epsilon.
	\]
	We thus have
	\begin{multline*}
	\dist(g(h(x)),g(h(y)))
	\leq (\kappa(g,h(x))+\delta)\dist(h(x),h(y))
	\leq\\ (\kappa(g,h(x))+\delta)(\kappa(h,x)+{\epsilon})\dist(x,y).
	\end{multline*}
	We have proved that
	\[
	0<\dist(x,y)\leq \frac{\epsilon'}{2\tmu(h,x)}\Rightarrow\frac{\dist(f(x),f(y))}{\dist(x,y)}\leq (\kappa(g,h(x))+\delta)(\kappa(h,x)+{\epsilon}),
	\]
	which readily implies $\kappa(f,x)\leq (\kappa(g,h(x))+\delta)(\kappa(h,x)+{\epsilon})$. Since the choice of $\delta$ is arbitrary, the claimed bound \cref{eq:boundmu} follows.

Finally, \cref{eq:boundmu} is valid if $\tmu(h,x)=\infty$ or $\tmu(g,h(x)) = \infty$, as $\tmu(g,h(x)) \ge 1$ and $\tmu(h,x)\ge1$. This concludes the proof.\qed

\subsection{Proof of \cref{lem:geodesicas}}\label{sec:P1}
Let $\gamma  : [0,d] \rightarrow \R^n$ be the minimizing geodesic from $x$ to $y$, parameterized by arc-length in such a way that $\dist(\gamma(a),\gamma(b))=b-a$ for $0\leq a\leq b\leq d$, and consider the univariate function $\psi(t) = \dist(f(x),f(\gamma(t))$, which is continuous since $\kappa(f,x)<\infty$ for $x$ in the curve. Given $\delta > 0$, consider the set
\[
S = \{ t : \psi(t) \leq (C+\delta) t   \} \subseteq [0,d]
\]
and let $\sigma = \sup S$; since $0 \in S$ and $S$ is bounded $\sigma$ exists. Moreover $\sigma \in S$ as $S$ is closed. Suppose $\sigma < d$. Then,
\[
\psi(\sigma + \epsilon) \leq \psi(\sigma) + \dist(f(\gamma(\sigma)),f(\gamma(\sigma+\epsilon))).
\]
The first addend is bounded by $(C+\delta)\sigma$. To estimate the second addend observe that, by definition of $C$, the condition number, and $\limsup$, there exists $\eta$ such that whenever $\epsilon < \eta$ then
\[
\dist(f(\gamma(\sigma)),f(\gamma(\sigma+\epsilon)))
\leq \left(C+\frac{\delta}{2}\right) \dist(\gamma(\sigma),\gamma(\sigma+\epsilon))
< (C+\delta) \epsilon.
\]
Hence, for sufficiently small $\epsilon$,
\(\psi(\sigma + \epsilon) < (C+\delta)(\sigma+\epsilon)\)
which implies $\sigma + \epsilon \in S$ and contradicts the definition of $\sigma$. We conclude $\sigma=d$, which proves the lemma.\qed

\subsection{Proof of \cref{prop_cond_expression}}\label{sec:Pm1}
Let
$\R^m = \cup_{j=0}^{3^{m}-1} U_j$ (respectively $\R^n = \cup_{j=0}^{3^{n}-1} V_j$) be the decomposition of the domain $\R^m$ (resp. the codomain $\R^n$) into its connected components induced by the coordinatewise relative error metric. The linear space spanned by the elements of $U_j$ will be denoted by $\overline{U_j}$; it is the subspace of vectors whose $i$th component is zero if $i\not\in\chi(x)$.

Let $j>0$ be fixed and assume $x \in U_j$.  If there is no open neighborhood $N_x \subset U_j$ of $x$ such that $f(N_x) \subset V_k$ for some $k$, then it follows from \cref{def:mu} and \cref{def:finsler} that $\kappa(f,x) = \infty$.
Otherwise,
\begin{align*}
\kappa(f,x)^2
= \lim_{\epsilon \rightarrow 0} \sup_{y\in \overline{U_j}}
\frac{{\underset{i \in \chi(f(x))}{\sum}  \left(\log \frac{|f_i(x + \epsilon y)|}{|f_i(x)|} \right)^2}}{{\underset{i\in\chi(x)}{\sum} \left(\log \frac{|x_i + \epsilon y_i |}{|x_i|} \right)^2 }}
= \lim_{\epsilon \rightarrow 0} \sup_{y \in \overline{U_j}} \frac{{\epsilon^2 \underset{i\in\chi(f(x))}{\sum} \left( \frac{(\deriv{x}{f_i})(y)}{f_i(x)} \right)^2 + o(\epsilon^2)}}{{\epsilon^2 \sum_{i\in\chi(x)}  \left(\frac{y_i}{x_i} \right)^2 + o(\epsilon^2) }},
\end{align*}
having used the definition of the derivative to expand $f_i(x+\epsilon y)$ as $f_i(x)+\epsilon \deriv{x}{f_i} y + o(\epsilon)$.
On the other hand, the square of the right--hand term of \eqref{eqn_mu_diff} equals
\begin{align*}
\kappa(f,x)^2
= \sup_{z \in \overline{U_j}} \frac{{\underset{i\in\chi(f(x))}{\sum} \left( \frac{(\deriv{x}{f_i})(\diag(x)z)}{f_i(x)} \right)^2 }}{{ \sum_{i\in\chi(x)} z_i^2  }}
=\sup_{y \in \overline{U_j}} \frac{{\underset{i\in\chi(f(x))}{\sum} \left( \frac{(\deriv{x}{f_i})(y)}{f_i(x)} \right)^2 }}{{ \sum_{i\in\chi(x)}\left(\frac{y_i}{x_i} \right)^2  }},
\end{align*}
where we have changed coordinates by setting $y=\diag(x)z$. The proposition follows.\qed

\subsection{Proof of \cref{lem:Gronwall}}\label{sec:P2}
Let $x \in S_k$ be arbitrary, $\tmu(f,x)<\infty$, and let $y\in B_x$ be arbitrary. Denote the minimizing geodesic connecting $x$ and $y$ by $\gamma(t) : [0,d] \to \R^{m_k}$, where $d=\dist(x,y) \leq \frac{1}{a(M_k) \tmu(f,x)}$.

First we show that $\gamma \subset V_k$ under the conditions of \cref{lem:Gronwall}. Assume it is not true.
Since $\gamma(0)=x\in V_k\subseteq S_k\setminus\partial S_k$ (using (i)), there exists a supremum $t \in [0,d]$ such that $\gamma(s) \in V_k$ for all $0 \le s < t$ but $\gamma(t) \not\in V_k$.
From item (ii) in the assumptions and the fact that $\tmu(f,\gamma(s)) < \infty$ for all $s\in[0,t)$ because $\gamma([0,t))\subset V_k$, we have that $s \mapsto \tmu(f,\gamma(s))$ is a continuous function.
Let
	\[
	C(s)=\sup_{\alpha\in[0,s)}\tmu(f,\gamma(\alpha)),
	\]
which is a continuous function of $0 \le s < t$. From \cref{lem:geodesicas,eq:alternative} we have, for all $s\in[0,t)$,
	\begin{align*}
	\log \left|\frac{\tmu_f(\gamma(s))}{\tmu_f(x)}\right|
	= \dist( \tmu_f(\gamma(s)), \tmu_f(x) )
	&\leq \sup_{\alpha\in[0,s)}\kappa(\tmu_f,\gamma(\alpha)) s \\
	&\leq \frac{a(M_k)}{4}\sup_{\alpha\in[0,s)}\tmu(f,\gamma(\alpha)) s
	\leq \frac{a(M_k)}{4} C(s) s.
	\end{align*}
	In particular, since $0 < t \le d \le \frac{1}{a(M_k)\tmu(f,x)}$, it follows that
\[
\log \left|\frac{C(s)}{\tmu_f(x)}\right| \leq \frac{a(M_k)}{4} C(s) s\leq \frac{C(s)}{4\tmu(f,x)},\quad s\in[0,t).
\]
This means that the set $R=\{C(s)/\tmu(f,x):s\in[0,t)\}$ is contained in the set $\{\alpha\in (0,\infty):\log(\alpha)\leq \alpha/4\}$, which is itself contained in $(0,2]\cup [8,\infty)$. Since $R$ is connected (as $C$ is continuous) and contains the point $1$ (since $C(0)=\tmu(f,x)$), we must have $R\subseteq(0,2]$. That is, $C(s)\leq 2\tmu(f,x)$ for all $s\in[0,t)$. Since the hypothesis includes that the condition number explodes to $\infty$ when $\gamma(s)$ approaches the boundary $\partial S_k$ or the limit point has infinite condition number, this contradicts the existence of $t \in [0,d]$ such that $\gamma(t) \not\in V_k$. We conclude that $\gamma \subset V_k$, and hence $B_x$, the geodesic ball of radius $\frac{1}{a(M_k) \tmu(f,x)}$, is contained in $V_k\subseteq S_k$ as well. This establishes the first item in \cref{def_amenable}.

As the argument above applies for all $t\in[0,d]$ we can also conclude
\[
\tmu(f,y)\leq C(d)\leq 2 \tmu(f,x),
\]
which establishes the second item in \cref{def_amenable}. This concludes the proof.\qed

\section{The BSS model of computation}\label{appendix:bss}
We recall in this appendix some known definitions to answer formal questions like ``what is an algorithm?'' and ``what is an approximate computation?''

\subsection{The Blum--Shub--Smale model of computation}

The BSS computational model \cite{BSS1989,BCSS1998} is a formalization of the concept of {\em algorithm}, similar to the classic Turing machine but permitting exact computation between real numbers. Indeed, a BSS machine is sometimes called a real number Turing machine. Formally, a BSS machine $M(\bar{I},\bar{O},\bar{S},G)$ in canonical form consists of an input space $\bar{I}$, an output space $\bar{O}$, a state space $\bar{S}$, and a connected, finite directed graph $G=(V,E)$ with nodes $V$ and edges $E$. The state space can be thought of as an ``internal memory'' that can grow as large as wanted: an element $\bar{s}\in\bar{S}$ is a bi-infinite sequence of real numbers $(\ldots,s_{-2},s_{-1},s_0,s_1,s_2,\ldots)$ where all but a finite quantity of entries are zero. Following \cite{MalajovichShub}, a node $v\in V$ of $G$ is of one of the following types:
\begin{enumerate}
\item \textit{Input}: this is the unique node where the input of the machine is written. It is characterized by having no incoming edges. It takes the input $\bar{i}\in\bar{I}$ and sends it to some state $\bar{s}\in \bar{S}$.
\item \textit{Output}: characterized by having no outgoing edges. Once the output node has been reached, the machine terminates and the current state $\bar{s}\in \bar{S}$ is converted to an output $\bar{o}\in \bar{O}$.
\item \textit{Computation}: these have exactly one outgoing edge, and can perform an operation on the current state $\bar s\in \bar S$, changing it to some other $\hat s\in \bar S$. An ``operation'' is one of the basic operations $\{+,-,\times,/\}$ executed on two elements of $\bar s$, or copying one number in another place of the memory, or adding a built-in constant of the machine. All these operations produce exact answers.
\item \textit{Branch}: these have two outgoing edges. If $s_0>0$ then the first outgoing edge is taken, otherwise the second outgoing edge is taken.
\item ``\textit{Fifth node}:'' these move the state space elements. All elements of $\bar s=(\dots,s_{-2},s_{-1},s_0,s_1,s_2,\ldots)$ are shifted either one step right or left.
\end{enumerate}
We label the nodes as follows. Let $N$ be the number of non-terminating nodes in the computation graph $G$. The input node is $v_0 \in V$. The $N'$ output nodes are $v_{N+1}, v_{N+2}, \ldots, v_{N+N'}$. All other nodes are numbered from $1$ to $N\in\N$.
The sequence of states $(0,v_0,s(0)),(1,v_{i_1},s(1)),\ldots$ is called the \textit{exact computation sequence} with input $\bar{i}$ if $s(0)=\bar i$ is the input and each $(j,v_{i_j},s(j)) \in \N \times V \times \bar{S}$ such that $(v_0,v_{i_1})\in E$, $(v_{i_j},v_{i_{j+1}})\in E$ for all $j$, and the correct branch is taken in branch nodes.
The BSS machine terminates for input $\bar{i}$ if its exact computation sequence is finite.
At a branch node $j$, the two possible outgoing edges can be thought of as ``go to line $k$'' instructions, thus allowing loops and conditionals. The purpose of the fifth nodes is to access to arbitrary positions in the sequence. Since we think of $\bar{S}$ as an internal random access memory and of elements $\bar{s}$ as finite collections of variables with assigned values, fifth nodes allow to recall any such variable from memory.

A BSS machine can be described by the computation graph $G$ which describes the program flow in a graphical way or by any reasonable pseudocode, with the caveat that \emph{exact computation of rational functions with real numbers is allowed.} The use of other functions such as for example $\sqrt{x}$, $\log x$, and $\sin x$ is not permitted in a pseudocode description of a BSS machine; these methods should be implemented by separate subroutines.

We cannot be sure that a given machine will finish on every input, nor can we guarantee that the running time will be bounded by some constant independent of the input. A good part of the theoretical efforts in the study of BSS machines has been devoted to understand deep questions related to these points, see \cite{BCSS1998,BC2013}. These subtle issues are not featured in this paper, but we must be aware that a machine is not a function defined on $\bar I$, since it may produce no output (i.e., it may run forever) in some input. The set of points $\Omega\subseteq\bar I$ where a machine produces some output is called the {\em halting set} of the machine, and thus we can capture a part of the machine by considering it as a function defined on $\Omega$, but $\Omega$ is unknown a priori.

Note also that the result of a computation node \emph{might not be well defined for some values of the input} since division by $0$ may occur. Thus, formally, before any computation node where a division is performed we include a branch node that checks whether the denominator equals $0$; if so, the branch node repeats forever so that the machine will never finish on that input.

\subsection{The standard model of floating-point arithmetic}\label{sec_flp_model}

We recall the usual properties assumed of floating-point arithmetic \cite{Higham1996}. A floating-point number system $\F\subseteq\R$ with base $2$ is a
subset of the real numbers of the form
\[
\pm m\cdot 2^{e-t},
\]
where $t\in\Z$, $t>2$,
 is the \textit{precision} and $e\in\Z$, $\emin\leq e\leq \emax$ is bounded. Moreover, the mantissa $m\in\Z$ is either $0$ or satisfies $2^{t-1}\leq m\leq 2^t-1$, ensuring a unique representation. In other words, $x\in\F$ if $x=0$ or $x$ is of the form
\(
x=\pm 2^e\cdot [0.a_1\ldots a_t]_2
\)
for some $e$ in the range and $a_1,\ldots,a_t\in\{0,1\}$, $a_1\neq0$.
 The \textit{unit roundoff} is $u=2^{-t}$.

With these definitions and the assumption about $\emin=-\infty$ and $\emax=\infty$, the floating-point number system $\F_u$ satisfies the well-known axioms:
\begin{itemize}
    \item For all $x\in\R$, $\fl(x) \in \F_u$.
	\item For all $x\in\R$, \(\fl(x)=x(1+\delta)\) for some real $|\delta|\leq u$.
	\item For all $x\in\F_u$, $\fl(x) = x$.
	\item For all $x\in\F_u$, $\fl(-x)=-\fl(x)$.
	\item If $u'<u$ and $x=\fl(x)$, then $x=\mathrm{fl}_{u'}(x)$.
	\item If $x,y\in\R$ and $x\leq y$, then $\fl(x)\leq \fl(y)$.
	\item For every operation $\circ \in \{+_\R,-_\R,\cdot_\R,\div_\R\}$, there is a corresponding floating-point operation $\hat{\circ} : \F_u \times \F_u \to \F_u$ such that for all $x, y \in \F_u$ we have that
	\[
	x \operatorname{\hat{\circ}}y = (x \circ y)(1+\delta) \text{ for some real } |\delta|\leq u,
	\]
	with the unique exception of division by $0$ that produces either $\mathrm{NaN}$ or $\pm\infty$.
\end{itemize}

\subsection{Approximate computations}
Numerical analysis studies the behaviour of algorithms in the presence of roundoff errors introduced by floating-point arithmetic. In recent works \cite{cucker2015,MalajovichShub}, the BSS model was used to formalize intuitive concepts commonly used in the study of algorithms with a focus on complexity theory for approximate computations. Both define a \textit{numerical algorithm} as a BSS machine whose computations are made in floating-point arithmetic.\footnote{Cucker \cite[Definition 2]{cucker2015} gives a definition that is similar to that of \textit{weak and strong approximate computations} in \cite[Section 7]{MalajovichShub}, although there are some subtle differences in their definitions of complexity classes. Our theory is quite robust and does not critically depend on these details.} We recall the definition based on strong approximate computations from \cite{MalajovichShub}.

\begin{definition}[\protect{Strong approximate computation \cite[Definition 7.1]{MalajovichShub}}]\label{def:FPalgorithm}
Consider the BSS machine $M(\bar{I},\bar{O},\bar{S},G)$ and let $0<u<\frac{1}{4}$ be a unit roundoff. The \emph{strong $u$-computation} for $M$ on input $\bar{i}\in \bar{I}$ is the exact computation sequence $(j,v_{i_j},\widehat{s}(j))$ obtained by the BSS machine $M^u(\bar{I},\bar{O},\bar{S},\widehat{G})$, where $\widehat{G}$ is constructed from $G$ by replacing all
\begin{itemize}
 \item real operations $x \circ y$ with $\circ \in \{+_\R,-_\R,\cdot_\R,\div_\R\}$ by their floating-point counterpart $x \operatorname{\hat{\circ}} y$,
 \item real constants $c\in\R$ by $\fl(c)$, and
 \item real inputs or outputs $x\in\R$ by $\fl(x)$.
\end{itemize}

For every precision $t>2$ (or, equivalently, roundoff $u<\frac{1}{4}$) the machine $M^u$ has access to $u$ and $t$ as internal constants, which formally means that its input is $(\bar i,u,t)$. The halting set $\Omega^u$ is the set of inputs for which the strong $u$-computation terminates. If $\bar i\in \Omega^u$ we write $M^u(\bar i)$ for the output (i.e., the last state) of the strong $u$-computation. With this notation, we can look at $M^u$ as a function $M^u:\Omega^u\to\bar O$. We call $M^u$ a \emph{numerical algorithm}.
\end{definition}

Two different BSS machines with the same output will in general produce distinct strong $u$-computation sequences with possibly other outputs for the same $x$.
Even the halting sets of two such computations can be different.
This is because the roundoff errors made in each floating-point operation can accumulate in various ways; a classic example is the machine computing $x\to x+1$ by adding $1$ and another doing the same by adding $0.1$ ten times.

As pointed out in \cite{cucker2015}, since integer numbers can be represented exactly in a $b$-ary expansion, we can assume that a strong $u$-computation differs from the original exact computation of the BSS algorithm only in its treatment of reals and the real operations. Integers and the elementary integer operations $\circ_\Z \in \{ +_\Z, -_\Z, \cdot_\Z, \div_\Z \}$ need \emph{not} be replaced by floating-point operations.

\bibliographystyle{siam}
\bibliography{strings,stability}

\end{document}